\newcommand{\K}{\mathcal{K}}
\newcommand{\R}{\mathbb{R}}
\newcommand{\I}{\mathbf{I}}
\newcommand{\V}{\mathbf{V}}
\newcommand{\J}{\mathbf{J}}
\newcommand{\Hb}{\mathbf{H}}
\newcommand{\A}{\mathbf{A}}
\newtheorem{theorem}{Theorem}
\newtheorem{remark}{Remark}
\newtheorem{lemma}{Lemma}
\newtheorem{definition}{Definition}
\newproof{proof}{Proof}
\begin{document}
\thispagestyle{empty}
\setcounter{page}{0}

\begin{Huge}
\begin{center}
Computational Science Laboratory Technical Report CSLTR-1/2014\\
\today
\end{center}
\end{Huge}
\vfil
\begin{huge}
\begin{center}
Paul Tranquilli and Adrian Sandu
\end{center}
\end{huge}

\vfil
\begin{huge}
\begin{it}
\begin{center}
``Exponential-Krylov methods for ordinary differential equations''
\end{center}
\end{it}
\end{huge}
\vfil
\textbf{Cite as:} Paul Tranquilli and Adrian Sandu.  Exponential-Krylov methods for ordinary differential equations. Journal of Computational Physics. Volume 278, Pages 31 -- 46, 2014.
\vfil

\begin{large}
\begin{center}
Computational Science Laboratory \\
Computer Science Department \\
Virginia Polytechnic Institute and State University \\
Blacksburg, VA 24060 \\
Phone: (540)-231-2193 \\
Fax: (540)-231-6075 \\ 
Email: \url{sandu@cs.vt.edu} \\
Web: \url{http://csl.cs.vt.edu}
\end{center}
\end{large}

\vspace*{1cm}

\begin{tabular}{ccc}
\includegraphics[width=2.5in]{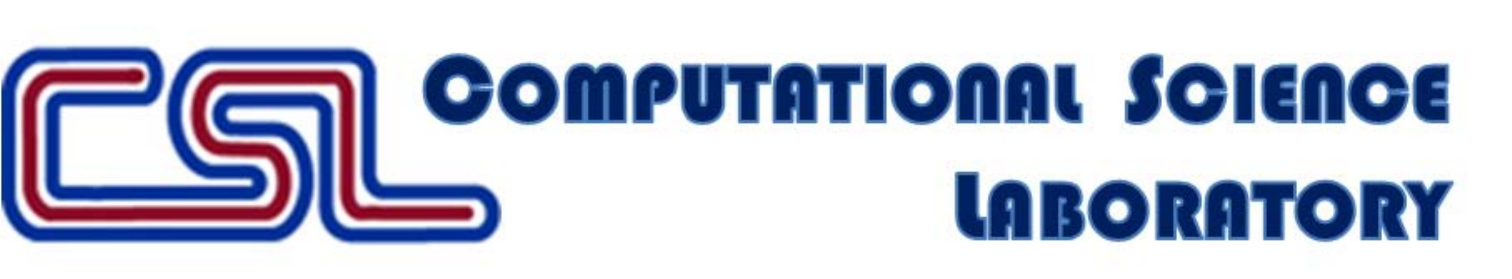}
&\hspace{2.5in}&
\includegraphics[width=2.5in]{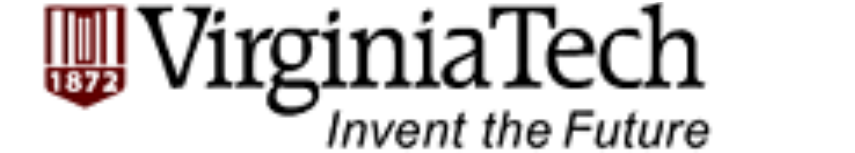} \\
{\bf\em Innovative Computational Solutions} &&\\
\end{tabular}

\newpage

\begin{frontmatter}
  \title{Exponential-Krylov methods for ordinary differential equations}
  \author[csl]{Paul Tranquilli}
  \ead{ptranq@vt.edu}
  \address[csl]{Computational Science Laboratory, Department of Computer Science, Virginia Tech.  Blacksburg, Virginia 24060}
  \author[csl]{Adrian Sandu}
  \ead{sandu@cs.vt.edu}

\begin{abstract}
This paper develops a new family of exponential time discretization methods called exponential-Krylov ({\sc expK}). 
The new schemes treat the time discretization and the Krylov based approximation of exponential matrix-vector products as a single computational process.  The classical order conditions theory developed herein accounts for both the temporal and the Krylov approximation errors. Unlike traditional exponential schemes, {\sc expK} methods require the construction of only a single Krylov space at each timestep. The number of basis vectors that guarantee the temporal order of accuracy does not depend on the application at hand.  Numerical results show favorable properties of {\sc expK} methods when compared to current exponential schemes.
 \end{abstract}

\end{frontmatter}

\section{Introduction}

Many methods exist to numerically approximate the solution of initial value problems 
\begin{equation}
\label{eqn:ode}
 \frac{dy}{dt} = f(t,y)\,,~~~ t_0 \leq t \leq t_F\,, ~~~ y(t_0) = y_n\,; \quad  y(t), f(t,y) \in \R^N\,.
\end{equation}
Multistep methods make use of the solution at several previous timesteps to compute the solution at $t_{n+1}$, while Runge-Kutta methods interpolate the solution at several points between the current solution, $t_n$, and the future solution, $t_{n+1}$.  In both cases implicit methods require the solution of (non-)linear system of equations at each time step.
Much work has been done towards the acceleration of the solutions to these systems. 
Iterative Krylov-based linear algebra solvers are the typical choice for large-scale applications \eqref{eqn:ode}. 
The generalized minimal residual (GMRES) method \cite{Saad} is the standard approach for constructing efficient solutions to linear systems arising throughout the integration of ODEs.  Jacobian-free Newton-Krylov (JFNK) methods \cite{Kelley_2004_Krylov,Keyes_2004_JFNK} make use of a GMRES like solver within a Newton iteration to solve the nonlinear equations arising from Runge-Kutta and multistep methods.  

Rosenbrock methods  \cite{Hairer_book_II}, a class of integrators coming from a linearization of Runge-Kutta methods, require only the solution of a linear system at each stage and are characterized by the explicit appearance of the Jacobian matrix 
\begin{equation*}
 \J_n = \left.\frac{\partial f}{\partial y}\right|_{t=t_n,y=y_n}
\end{equation*}
in the method itself.  Due to the approximate nature of solutions coming from iterative methods, the explicit appearance of $\J_n$ causes order reduction unless the system solution is very accurate.  For this reason Rosenbrock-W methods \cite{Hairer_book_II,Rang_2005_ROW3,Novati_2008_secantROW}, an extension of Rosenbrock methods allowing for arbitrary approximations of the matrix $\J_n$, have been developed.

Krylov-ROW methods \cite{Weiner_1998_order,Schmitt_1995_krylovW,Weiner_1997_rowmap,Podhaisky_1997_krylovW} couple Rosenbrock methods with Krylov based solvers for the linear systems arising therein.  A multiple Arnoldi process is used to enrich the Krylov space at each stage, and the order of the underlying Rosenbrock method is preserved with modest requirements on the Krylov space size, independent of the dimension of the ODE system under consideration.

The authors have recently developed Rosenbrock-K methods \cite{Tranquilli:2014} to pursue a similar goal. Krylov-ROW methods ensure the order results with standard Rosenbrock-W discretizations by adding requirements to the underlying Krylov space. In contradistinction, Rosenbrock-K methods guarantee the accuracy order through the use of a specific Krylov-based approximation of the Jacobian and the construction of new order conditions which take this approximation into account.  Rosenbrock-K methods have substantially fewer order conditions than Rosenbrock-W methods allowing for the construction of schemes of higher order with fewer stages.  More importantly, Rosenbrock-K methods give a strict lower bound on the number of Krylov basis vectors required for accuracy that depends only on the order of the method, and is completely independent of the dimension of the ODE system under consideration.

Exponential integrators \cite{Hochbruck:1998, Tokman_2011_EPIRK, Tokman_2006_EPI} replace the need to construct solutions to a linear system, or equivalently approximate the rational matrix function times vector product $\left( \I_N - h A\right)^{-1} v$, with the similar, hopefully cheaper, requirement to approximate the exponential matrix times vector product $\exp(h A) v$.  Like the solution of large linear systems, approximations of the matrix exponential times vectors are typically obtained using Krylov based methods.

In this paper we extend the ideas of the Rosenbrock-$K$ methods presented in \cite{Tranquilli:2014} to the particular set of exponential integrators discussed in Hochbruck, Lubich, and Selhofer \cite{Hochbruck:1998} 
and introduce the new family of exponential-Krylov (exponential-$K$) methods.  The new schemes require the construction of only a single Kyrlov basis at each timestep, as opposed to each stage in the case of standard exponential methods.  Moreover, the required dimension of the subspace to guarantee the desired order of accuracy is independent of the system \eqref{eqn:ode} under consideration.

The remainder of the paper is organized as follows. Section \ref{sec:formulation} presents the  exponential-$K$ framework and the Krylov approximation of the Jacobian used.  Section \ref{sec:orderconditions} develops the order condition theory for the new exponential-$K$ methods using both Butcher trees and $B$-series. Section \ref{sec:newmethods} constructs a practical four stage, fourth order exponential-$K$ method. Section \ref{sec:alternative} discusses alternative implementations of existing exponential methods, and Section \ref{sec:results} presents numerical results. Conclusions are drawn in Section \ref{sec:conclusions}.

\section{Formulation of exponential-Krylov methods}
\label{sec:formulation}

\subsection{Exponential-W integrators}
\label{sec:expW}
The starting point of our investigation is following class of exponential-W integrators proposed in \cite{Hochbruck:1998}
\begin{equation}
 \label{eqn:genformW}
  \begin{array}{rcl}
   k_i & = & \varphi(h\gamma \A_n)\left(h\, F_i + h\A_n\displaystyle\sum_{j=1}^{i-1}\gamma_{i,j}k_j\right), \\
   F_i & = & f\left( y_n +  \displaystyle\sum_{j=1}^{i-1} \alpha_{i,j}k_j \right), \\
   y_{n+1} & = & y_n +  \displaystyle\sum_{j=1}^s b_i k_i,
  \end{array}
\end{equation}
where $\A_n$ is either the matrix $\J_n$ or an approximation of it. Equation (\ref{eqn:genformW}) formalizes explicit Runge-Kutta methods when $\varphi(z) = 1$, Rosenbrock methods when $\varphi(z) = 1/(1-z)$, and exponential methods when $\varphi(z) = (e^z-1)/z$.  Note that similar to the Rosenbrock methods discussed before, equation (\ref{eqn:genformW}) makes explicit use of the matrix $\J_n$, and so it is natural to explore conditions allowing for arbitrary approximations as in the case of Rosenbrock-W methods.  A discussion of these methods and their order conditions is given in \cite{Hochbruck:1998}.

\subsection{Exponential-K integrators}
\label{sec:expK}

The new exponential-K methods proposed in this work have the same general form as exponential-W methods  (\ref{eqn:genformW}), but use a  specific, Krylov based-approximation $\A_n$ of the Jacobian. To begin we construct the $M$-dimensional Krylov space $\K_M$ where $M \ll N$ and
\begin{equation}
\begin{array}{rcl}
\label{eqn:krylovspace}
 \K_M & = & \textrm{span}\left\{f_n, \J_n f_n, \J_n^2f_n, \dots, \J_n^{M-1}f_n\right\} \\
          & = & \textrm{span}\left\{v_1, v_2, \dots, v_M\right\}
\end{array}
\end{equation}
using a modified Arnoldi iteration \cite{vanderVorst}.  The Arnoldi iteration returns the matrix
\begin{equation*}
\V = \left[v_1, v_2, \dots, v_M\right] \in \R^{N\times M}
\end{equation*}
whose columns form an orthonormal basis of $\K_M$, and the upper Hessenberg matrix
\begin{equation}
\label{eqn:arnH}
\Hb = \V^T\,\J_n\,\V \in \R^{M \times M}.
\end{equation}
From these two matrices we construct the following Krylov-based approximation of the Jacobian
\begin{equation}
\label{eqn:approxJac}
 \A_n = \V\,\Hb\,\V^T = \V\,\V^T\,\J_n\,\V\,\V^T.
\end{equation}
The powers of this matrix have the following property.
\begin{lemma}[Powers of $\A_n$]
\label{lem:powers}
For any $k = 0, 1, 2, \dots$
 \[
 \A_n^k = \V\,\Hb^k\,\V^T.
 \]
\end{lemma}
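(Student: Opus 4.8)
The plan is to prove this by induction on $k$, exploiting the orthonormality of the Krylov basis columns.
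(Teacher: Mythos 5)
Your plan is exactly the paper's argument: induction on $k$, with the inductive step $\A_n^k = \V\Hb\V^T\,\V\Hb^{k-1}\V^T = \V\Hb^k\V^T$ collapsing because orthonormality of the columns gives $\V^T\V = \I_M$. The only detail worth flagging is that the induction should start at $k=1$ (for $k=0$ the right-hand side is the projector $\V\V^T$, not $\I_N$), which is how the paper handles the base case.
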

\begin{proof}
 We give the proof of the Lemma by induction.  As the base case we have that
 \[\A_n^1 = \V\,\Hb\,\V^T = \V\,\Hb^1\,\V^T\]
 next we assume that $\A_n^{k-1} = \V\,\Hb^{k-1}\,\V^T$ and show that $\A_n^k = \V\,\Hb^k\,\V^T$.
 \[\A_n^k = \A_n\A_n^{k-1} = \V\,\Hb\,\V^T\left(\V\,\Hb^{k-1}\,\V^T\right) = \V\,\Hb\, \left( \V^T\,\V\right)\Hb^{k-1}\,\V^T\]
 because $V$ is an orthonormal matrix $\V^T\,\V = \I_N$, and so
 \[\A_n^k = \V\,\Hb^k\,\V^T\]
 \qed
\end{proof}

The construction of exponential integrators uses matrix functions of the form $\varphi_k(h\gamma \A_n)$,
where the functions are defined by 
\begin{equation*}
\varphi_k(z) = \displaystyle\int_0^1 e^{z(1-\theta)}\frac{\theta^{k-1}}{(k-1)!} \, d\theta, \quad k = 0, 1, 2, \dots
\end{equation*}
and satisfy the recurrence relation
\begin{equation*}
\varphi_{k+1}(z) = \frac{\varphi_k(z) - 1/k!}{z}, \quad \varphi_k(0) = \frac{1}{k!}.
\end{equation*}
The matrix functions have the following property.

\begin{lemma}[Matrix functions of the approximate Jacobian]
\label{lem:redphi}
 \[
 \varphi_k(h\gamma \A_n) = \frac{1}{k!}(\I_N - \V\,\V^T) + \V\,\varphi_k(h\gamma \Hb)\,\V^T
\quad k = 1, 2, \dots
 \]
\end{lemma}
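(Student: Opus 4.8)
The plan is to prove the identity by expanding $\varphi_k$ in its defining power series and applying Lemma~\ref{lem:powers} term by term. The integral and recurrence definitions given above are equivalent to the Taylor expansion $\varphi_k(z) = \sum_{j=0}^{\infty} z^j/(j+k)!$, whose constant term is $\varphi_k(0)=1/k!$. Substituting $z = h\gamma\,\A_n$ and separating off this constant term, I would write
\[
\varphi_k(h\gamma\,\A_n) = \frac{1}{k!}\,\A_n^0 + \sum_{j=1}^{\infty} \frac{(h\gamma)^j}{(j+k)!}\,\A_n^j,
\]
where $\A_n^0 = \I_N$.

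Next I would invoke Lemma~\ref{lem:powers} on each term of the remaining sum (all with $j\ge 1$, so the lemma applies), replacing $\A_n^j$ by $\V\,\Hb^j\,\V^T$ and pulling the fixed matrices $\V$ and $\V^T$ outside the summation:
\[
\sum_{j=1}^{\infty} \frac{(h\gamma)^j}{(j+k)!}\,\A_n^j = \V\left(\sum_{j=1}^{\infty} \frac{(h\gamma\,\Hb)^j}{(j+k)!}\right)\V^T.
\]
The inner matrix series is precisely the Taylor series of $\varphi_k(h\gamma\,\Hb)$ with its own $j=0$ constant term $\tfrac{1}{k!}\I_M$ removed, i.e.\ it equals $\varphi_k(h\gamma\,\Hb) - \tfrac{1}{k!}\I_M$. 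Substituting this back and using $\V\,\I_M\,\V^T = \V\,\V^T$ gives
\[
\varphi_k(h\gamma\,\A_n) = \frac{1}{k!}\,\I_N - \frac{1}{k!}\,\V\,\V^T + \V\,\varphi_k(h\gamma\,\Hb)\,\V^T,
\]
which, after collecting the two constant-multiple terms, is exactly the claimed formula.

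The one point that requires care — and the reason the correction term $\tfrac{1}{k!}(\I_N - \V\,\V^T)$ appears — is the $j=0$ contribution. Although Lemma~\ref{lem:powers} is stated for $k=0,1,2,\dots$, the identity $\A_n^j = \V\,\Hb^j\,\V^T$ genuinely holds only for $j\ge 1$: for $j=0$ the left side is $\I_N$ while the right side is $\V\,\V^T$, which is a rank-$M$ projector rather than the identity. One must therefore treat the constant term of each series separately, and the mismatch between $\A_n^0 = \I_N$ on the $\A_n$ side and $\tfrac{1}{k!}\V\,\V^T$ arising from the $\Hb$ side is exactly what produces the $(\I_N - \V\,\V^T)$ factor. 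The interchange of the infinite sum with the matrix products $\V(\cdot)\V^T$ is routine, since $\varphi_k$ is entire and all spaces are finite dimensional, so I would not dwell on convergence.
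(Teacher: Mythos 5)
Your proof is correct and follows essentially the same route as the paper: expand $\varphi_k$ in its Taylor series, peel off the $j=0$ term, apply Lemma~\ref{lem:powers} to the $j\ge 1$ terms, and recognize the remaining sum as $\V\bigl(\varphi_k(h\gamma\Hb)-\tfrac{1}{k!}\I_M\bigr)\V^T$. Your explicit observation that the identity $\A_n^j=\V\Hb^j\V^T$ fails for $j=0$ (so Lemma~\ref{lem:powers} as stated should really exclude $k=0$) is a worthwhile point of care that the paper's proof passes over silently.
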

\begin{proof}
It is possible to expand $\varphi_k(z)$ as a Taylor series \cite{Loffeld_2013_Comparison}:
\begin{equation}
 \label{eqn:varphitaylor}
 \varphi_k(z) = \displaystyle\sum_{i = 0}^\infty c_i \frac{z^i}{(k+i)!}
\end{equation}
We have that
\[ \varphi_k(h\gamma \A_n) = \frac{1}{k!}\I_N + \displaystyle\sum_{i=1}^{\infty} \frac{(h\gamma)^i}{(k+i)!}\A_n^i \]
after applying Lemma \ref{lem:powers} we obtain
\[ \varphi_k(h\gamma \A_n) = \frac{1}{k!}\I_N + \displaystyle\sum_{i=1}^{\infty} \frac{(h\gamma)^i}{(k+i)!}\V\Hb^i\V^T .\]
Similarly we can expand $\V\varphi_k(h\gamma H)\V^T$ as
\[ \V\varphi_k(h\gamma \Hb)\V^T = \frac{1}{k!}\V\V^T + \displaystyle\sum_{i=1}^{\infty} \frac{(h\gamma)^i}{(k+i)!}\V\Hb^i\V^T, \]
taking the difference we see that
\[ \varphi_k(h\gamma \A_n) - \V\varphi_k(h\gamma \Hb)\V^T = \frac{1}{k!}\left(\I_N - \V\V^T\right). \]
Finally we move $\V\varphi_k(h\gamma H)\V^T$ across the equality to obtain
\begin{equation}
 \label{eqn:reducedphi}
 \varphi_k(h\gamma \A_n) = \frac{1}{k!} \left(\I_N - \V\V^T\right) + \V\varphi_k(h\gamma \Hb)\V^T
\end{equation}
\qed
\end{proof}

To finish the derivation of a reduced form for the exponential-Krylov integrator (\ref{eqn:genformW}) we introduce the following notation
\begin{equation*}
 k_i = \underbrace{\V \lambda_i}_{\in \K_M} + \underbrace{\mu_i}_{\in \K_M^\perp}, \quad F_i = \underbrace{\V\psi_i}_{\in \K_M} + \underbrace{\delta_i}_{\in \K_M^\perp}
\end{equation*}
where $\V\lambda_i$ and $\V\psi_i$ represent the components of $k_i$ and $F_i$ which reside in the Krylov subspace $\K_M$, and similarly $\mu_i$ and $\delta_i$ are the components residing in the space orthogonal to $\K_M$.  Insert equation (\ref{eqn:reducedphi}) and the split forms of $k_i$ and $F_i$ into the general method formulation (\ref{eqn:genformW}) to obtain
\begin{equation*}
\V\,\lambda_i + \mu_i = \V\left(\varphi(h\gamma \Hb)\psi_i + h \Hb \displaystyle\sum_{j=1}^{i-1} \gamma_{i,j}\lambda_j\right) + \delta_i\,.
\end{equation*}
This leads to the following equation for the reduced stage vector
\begin{equation}
\label{eqn:redstage}
\lambda_i = \varphi(h\gamma \Hb)\left(\psi_i + h\Hb\displaystyle\sum_{j=1}^{i-1}\gamma_{i,j}\lambda_j\right)\,.
\end{equation}
The full stage values can be recovered as 
\begin{equation}
k_i = \V\,\lambda_i + (F_i - \V\psi_i).
\end{equation}
A single step of an autonomous exponential-K method is given in Algorithm \ref{alg:expK-autonomous-step}.
\begin{algorithm}[ht]
\caption{One step of an autonomous exponential-K integrator}\label{alg:expK-autonomous-step}
\begin{algorithmic}[1]
   \State Compute $\Hb$ and $\V$ using the $N$-dimensional Arnoldi process \cite{vanderVorst}
   \For{$i = 1,\dots,s$}\Comment{For each stage, in succession}
      \begin{eqnarray*}
       F_i &=& f\left( y_n + \displaystyle\sum_{j=1}^{i-1}\alpha_{i,j}k_j\right) \\
       \psi_i &=& \V^T\, F_i \\
      \lambda_i &=& \varphi\left(h\gamma \Hb\right)\, \left( h\psi_i + h \Hb \displaystyle\sum_{j=1}^{i-1} \gamma_{i,j} \lambda_j\right) \\
       k_i &=& \V\, \lambda_i + h\, (F_i - \V\, \psi_i)
     \end{eqnarray*}
   \EndFor
   \State $y_{n+1} = y_n + \displaystyle\sum_{i=1}^s b_i k_i$
\end{algorithmic}
\end{algorithm}
%

%
\begin{remark}
Because the matrix $\Hb$ has dimension $M \times M$ direct methods can be used to compute the matrix function $\varphi(h\gamma\Hb)$.  In the case of Rosenbrock methods where $\varphi(z) = 1/(1-z)$ a direct $LU$-decomposition can be used. For exponential methods where $\varphi(z) = (e^z-1)/z$ a Pade approximation \cite{Higham:2005} can be utilized.  Furthermore, a single matrix function needs to be evaluated at each step when the matrices $\Hb$ and $\V$ are constructed.
\end{remark}
\begin{remark}
We have only given here the autonomous form of an exponential-$K$ method.  A non-autonomous form is possible by constructing an extended ODE system and the corresponding Jacobian.  
This construction follows closely the treatment given in \cite{Tranquilli:2014}, where an $N+1$ dimensional Arnoldi iteration is discussed.
\end{remark}

\section{Order conditions for exponential-K methods}
\label{sec:orderconditions}
We construct classical order conditions for exponential-K methods.  To this end we match the Taylor series expansion of the numerical and exact solutions up to a specified order.  Butcher-trees\cite{Hairer_book_I} are an established method of representing terms in the Taylor series expansions of Runge-Kutta like methods.  The derivation of order conditions for $K$-methods is an extension of the framework developed for $W$-methods.  The theory for $W$ methods is constructed using $TW$-trees, a subclass of $P$-trees, which are themselves an extension of $T$-trees that allow for two different colored nodes.
\[ 
TW = \left\{ \begin{array}{cl}P\textrm{-trees:} & \textrm{end vertices are meagre, and} \\
				 & \textrm{fat vertices are singly branched} \end{array} \right\} 
\]

In the context of $TW$(and $TK$)-trees a meagre, or solid, node represents an appearance of the exact Jacobian matrix $\J_n$, while a fat, or empty, node represents the appearance of the approximate Jacobian matrix $\A_n$.  Each tree represents a single elementary differential in the Taylor series of either the exact or numerical solutions of the ODE.  

\begin{figure}[ht]
\begin{center}
\def\arraystretch{1.5}
\footnotesize
\begin{tabular}{|c|c|c|c|c|c|}
\hline
$\tau$ & \begin{tikzpicture}[scale=.5]
      [meagre/.style={circle,draw, fill=black!100,thick},
      fat/.style={circle,draw,thick}]
      \node[circle,draw,, fill=black!100,thick] (j) at (0,0) [label=right:$j$] {};
  \end{tikzpicture} & \begin{tikzpicture}[scale=.5]
      [meagre/.style={circle,draw, fill=black!100,thick},
      fat/.style={circle,draw,thick}]
      \node[circle,draw, fill=black!100,thick] (j) at (0,0) [label=right:$j$] {};
      \node[circle,draw, fill=black!100,thick] (k) at (1,1) [label=right:$k$] {};
      \draw[-] (j) -- (k);
  \end{tikzpicture} & \begin{tikzpicture}[scale=.5]
      [meagre/.style={circle,draw, fill=black!100,thick},
      fat/.style={circle,draw,thick}]
      \node[circle,draw, thick] (j) at (0,0) [label=right:$j$] {};
      \node[circle,draw, fill=black!100,thick] (k) at (1,1) [label=right:$k$] {};
      \draw[-] (j) -- (k);
  \end{tikzpicture} & \begin{tikzpicture}[scale=.5]
      [meagre/.style={circle,draw, fill=black!100,thick},
      fat/.style={circle,draw,thick}]
      \node[circle,draw, fill=black!100,thick] (j) at (0,0) [label=right:$j$] {};
      \node[circle,draw, fill=black!100,thick] (k) at (1,1) [label=right:$k$] {};
      \node[circle,draw, fill=black!100,thick] (l) at (-1,1) [label=left:$l$] {};
      \draw[-] (j) -- (k);
      \draw[-] (j) -- (l);
  \end{tikzpicture} & \begin{tikzpicture}[scale=.5]
      [meagre/.style={circle,draw, fill=black!100,thick},
      fat/.style={circle,draw,thick}]
      \node[circle,draw, fill=black!100,thick] (j) at (0,0) [label=right:$j$] {};
      \node[circle,draw, fill=black!100,thick] (k) at (1,1) [label=right:$k$] {};
      \node[circle,draw, fill=black!100,thick] (l) at (0,2) [label=left:$l$] {};
      \draw[-] (j) -- (k);
      \draw[-] (k) -- (l);
  \end{tikzpicture}   \\
\hline
$F(\tau)$ & $f^J$ & $f^J_Kf^K$ & $\A_{JK}f^K$ & $f^J_{KL}f^Kf^L$ & $f^J_Kf^K_Lf^L$   \\
\hline
$\mathsf{a}(\tau)$ & $x_1$ & $x_2$ & $x_3$ & $x_4$ & $x_5$ \\
\hline
$B^\#\left(hf(B(\mathsf{a},y))\right)$ & 1 & $x_1$ & $0$ & $x_1^2$ & $x_2$ \\
\hline
$B^\#\left(h\A B(\mathsf{a},y)\right)$ & $0$ & $0$ & $x_1$ & $0$ & $0$ \\
\hline
$B^\#\left(\varphi(h\gamma \A)B(\mathsf{a},y)\right)$ & $x_1$ & $x_2$ & $x_3 + c_1x_1$ & $x_4$ & $x_5$ \\
\hline
\hline
$\tau$ & \begin{tikzpicture}[scale=.5]
      [meagre/.style={circle,draw, fill=black!100,thick},
      fat/.style={circle,draw,thick}]
      \node[circle,draw, fill=black!100,thick] (j) at (0,0) [label=right:$j$] {};
      \node[circle,draw, thick] (k) at (1,1) [label=right:$k$] {};
      \node[circle,draw, fill=black!100,thick] (l) at (0,2) [label=left:$l$] {};
      \draw[-] (j) -- (k);
      \draw[-] (k) -- (l);
  \end{tikzpicture} & \begin{tikzpicture}[scale=.5]
      [meagre/.style={circle,draw, fill=black!100,thick},
      fat/.style={circle,draw,thick}]
      \node[circle,draw,thick] (j) at (0,0) [label=right:$j$] {};
      \node[circle,draw, fill=black!100, thick] (k) at (1,1) [label=right:$k$] {};
      \node[circle,draw, fill=black!100,thick] (l) at (0,2) [label=left:$l$] {};
      \draw[-] (j) -- (k);
      \draw[-] (k) -- (l);
  \end{tikzpicture} & \begin{tikzpicture}[scale=.5]
      [meagre/.style={circle,draw, fill=black!100,thick},
      fat/.style={circle,draw,thick}]
      \node[circle,draw, thick] (j) at (0,0) [label=right:$j$] {};
      \node[circle,draw, thick] (k) at (1,1) [label=right:$k$] {};
      \node[circle,draw, fill=black!100,thick] (l) at (0,2) [label=left:$l$] {};
      \draw[-] (j) -- (k);
      \draw[-] (k) -- (l);
  \end{tikzpicture} & \begin{tikzpicture}[scale=.5]
      [meagre/.style={circle,draw, fill=black!100,thick},
      fat/.style={circle,draw,thick}]
      \node[circle,draw, fill=black!100,thick] (j) at (0,0) [label=right:$j$] {};
      \node[circle,draw, fill=black!100,thick] (k) at (-1,1) [label=left:$k$] {};
      \node[circle,draw, fill=black!100,thick] (l) at (0,1) [label=above:$l$] {};
      \node[circle,draw, fill=black!100,thick] (m) at (1,1) [label=right:$m$] {};
      \draw[-] (j) -- (k);
      \draw[-] (j) -- (l);
      \draw[-] (j) -- (m);
  \end{tikzpicture} & \begin{tikzpicture}[scale=.5]
      [meagre/.style={circle,draw, fill=black!100,thick},
      fat/.style={circle,draw,thick}]
      \node[circle,draw, fill=black!100,thick] (j) at (0,0) [label=right:$j$] {};
      \node[circle,draw, fill=black!100,thick] (k) at (-1,1) [label=left:$k$] {};
      \node[circle,draw, fill=black!100,thick] (l) at (1,1) [label=right:$l$] {};
      \node[circle,draw, fill=black!100,thick] (m) at (1,2) [label=right:$m$] {};
      \draw[-] (j) -- (k);
      \draw[-] (j) -- (l);
      \draw[-] (l) -- (m);
  \end{tikzpicture}  \\
\hline
$F(\tau)$ & $f^J_K\A_{KL}f^L$ & $\A_{JK}f^K_Lf^L$ & $\A_{JK}\A_{KL}f^L$ & $f^J_{KLM}f^Kf^Lf^M$ & $f^J_{KL}f^L_Mf^Mf^K$ \\
\hline
$\mathsf{a}(\tau)$ & $x_6$ & $x_7$ & $x_8$ & $x_9$ & $x_{10}$ \\
\hline
$B^\#\left(hf(B(\mathsf{a},y))\right)$& $x_3$ & $0$ & $0$ & $x_1^3$ & $x_1x_2$ \\
\hline
$B^\#\left(h\A B(\mathsf{a},y)\right)$ & $0$ & $x_2$ & $x_3$ & $0$ & $0$ \\
\hline
$B^\#\left(\varphi(h\gamma \A)B(\mathsf{a},y)\right)$ & $x_6$ & $x_7 + c_1x_2$ & $x_8 + c_1x_3 + c_2x_1$ & $x_9$ & $x_{10}$ \\
\hline
\hline
$\tau$  &  \begin{tikzpicture}[scale=.5]
      [meagre/.style={circle,draw, fill=black!100,thick},
      fat/.style={circle,draw,thick}]
      \node[circle,draw, fill=black!100,thick] (j) at (0,0) [label=right:$j$] {};
      \node[circle,draw, fill=black!100,thick] (k) at (-1,1) [label=left:$k$] {};
      \node[circle,draw, thick] (l) at (1,1) [label=right:$l$] {};
      \node[circle,draw, fill=black!100,thick] (m) at (1,2) [label=right:$m$] {};
      \draw[-] (j) -- (k);
      \draw[-] (j) -- (l);
      \draw[-] (l) -- (m);
  \end{tikzpicture} & \begin{tikzpicture}[scale=.5]
      [meagre/.style={circle,draw, fill=black!100,thick},
      fat/.style={circle,draw,thick}]
      \node[circle,draw, fill=black!100,thick] (j) at (0,0) [label=right:$j$] {};
      \node[circle,draw, fill=black!100,thick] (k) at (1,1) [label=right:$k$] {};
      \node[circle,draw, fill=black!100,thick] (l) at (2,2) [label=right:$l$] {};
      \node[circle,draw, fill=black!100,thick] (m) at (0,2) [label=left:$m$] {};
      \draw[-] (j) -- (k);
      \draw[-] (k) -- (l);
      \draw[-] (k) -- (m);
  \end{tikzpicture} & \begin{tikzpicture}[scale=.5]
      [meagre/.style={circle,draw, fill=black!100,thick},
      fat/.style={circle,draw,thick}]
      \node[circle,draw,thick] (j) at (0,0) [label=right:$j$] {};
      \node[circle,draw, fill=black!100,thick] (k) at (1,1) [label=right:$k$] {};
      \node[circle,draw, fill=black!100,thick] (l) at (2,2) [label=right:$l$] {};
      \node[circle,draw, fill=black!100,thick] (m) at (0,2) [label=left:$m$] {};
      \draw[-] (j) -- (k);
      \draw[-] (k) -- (l);
      \draw[-] (k) -- (m);
  \end{tikzpicture} & \begin{tikzpicture}[scale=.5]
      [meagre/.style={circle,draw, fill=black!100,thick},
      fat/.style={circle,draw,thick}]
      \node[circle,draw, fill=black!100,thick] (j) at (0,0) [label=left:$j$] {};
      \node[circle,draw, fill=black!100,thick] (k) at (1,1) [label=right:$k$] {};
      \node[circle,draw, fill=black!100,thick] (l) at (0,2) [label=left:$l$] {};
      \node[circle,draw, fill=black!100,thick] (m) at (1,3) [label=right:$m$] {};
      \draw[-] (j) -- (k);
      \draw[-] (k) -- (l);
      \draw[-] (l) -- (m);
  \end{tikzpicture} & \begin{tikzpicture}[scale=.5]
      [meagre/.style={circle,draw, fill=black!100,thick},
      fat/.style={circle,draw,thick}]
      \node[circle,draw,fill=black!100,thick] (j) at (0,0) [label=left:$j$] {};
      \node[circle,draw,fill=black!100,thick] (k) at (1,1) [label=right:$k$] {};
      \node[circle,draw, thick] (l) at (0,2) [label=left:$l$] {};
      \node[circle,draw, fill=black!100,thick] (m) at (1,3) [label=right:$m$] {};
      \draw[-] (j) -- (k);
      \draw[-] (k) -- (l);
      \draw[-] (l) -- (m);
  \end{tikzpicture} \\
\hline
$F(\tau)$ & $f^J_{KL}\A_{LM}f^Mf^K$ & $f^J_Kf^K_{LM}f^Mf^L$ & $\A_{JK}f^K_{LM}f^Lf^M$ & $f^J_Kf^K_Lf^L_Mf^M$ & $ f^J_Kf^K_L\A_{LM}f^M $ \\
\hline
$\mathsf{a}(\tau)$ & $x_{11}$ & $x_{12}$ & $x_{13}$ & $x_{14}$ & $x_{15}$ \\
\hline
$hf(B(\mathsf{a},y))$ & $x_1x_3$ & $x_4$ & $0$ & $x_5$ & $x_6$ \\
\hline
$B^\#\left(h\A B(\mathsf{a},y)\right)$ & $0$ & $0$ & $x_4$ & $0$ & $0$ \\
\hline
$B^\#\left(\varphi(h\gamma \A)B(\mathsf{a},y)\right)$ & $x_{11}$ & $x_{12}$ & $x_{13} + c_1 x_4$ & $x_{14}$ & $x_{15}$ \\
\hline 
\end{tabular}
\caption{TW-trees up to order four (part one of two).}
\label{fig:TWtrees}
\end{center}
\end{figure}
\begin{figure}[ht]
\begin{center}
\renewcommand{\arraystretch}{1.5}
\footnotesize
\begin{tabular}{|c|c|c|c|}
\hline
$\tau$ & \begin{tikzpicture}[scale=.5]
      [meagre/.style={circle,draw, fill=black!100,thick},
      fat/.style={circle,draw,thick}]
      \node[circle,draw, fill=black!100,thick] (j) at (0,0) [label=left:$j$] {};
      \node[circle,draw, thick] (k) at (1,1) [label=right:$k$] {};
      \node[circle,draw, fill=black!100,thick] (l) at (0,2) [label=left:$l$] {};
      \node[circle,draw, fill=black!100,thick] (m) at (1,3) [label=right:$m$] {};
      \draw[-] (j) -- (k);
      \draw[-] (k) -- (l);
      \draw[-] (l) -- (m);
  \end{tikzpicture} & \begin{tikzpicture}[scale=.5]
      [meagre/.style={circle,draw, fill=black!100,thick},
      fat/.style={circle,draw,thick}]
      \node[circle,draw,fill=black!100, thick] (j) at (0,0) [label=left:$j$] {};
      \node[circle,draw, thick] (k) at (1,1) [label=right:$k$] {};
      \node[circle,draw, thick] (l) at (0,2) [label=left:$l$] {};
      \node[circle,draw, fill=black!100,thick] (m) at (1,3) [label=right:$m$] {};
      \draw[-] (j) -- (k);
      \draw[-] (k) -- (l);
      \draw[-] (l) -- (m);
  \end{tikzpicture} & \begin{tikzpicture}[scale=.5]
      [meagre/.style={circle,draw, fill=black!100,thick},
      fat/.style={circle,draw,thick}]
      \node[circle,draw, thick] (j) at (0,0) [label=left:$j$] {};
      \node[circle,draw, fill=black!100, thick] (k) at (1,1) [label=right:$k$] {};
      \node[circle,draw, fill=black!100,thick] (l) at (0,2) [label=left:$l$] {};
      \node[circle,draw, fill=black!100,thick] (m) at (1,3) [label=right:$m$] {};
      \draw[-] (j) -- (k);
      \draw[-] (k) -- (l);
      \draw[-] (l) -- (m);
  \end{tikzpicture} \\
\hline 
$F(\tau)$ & $f^J_K\A_{KL}f^L_Mf^M$ & $f^J_K\A_{KL}\A_{LM}f^M$ & $\A_{JK}f^K_Lf^L_Mf^M$ \\
\hline 
$\mathsf{a}(\tau)$ & $x_{16}$ & $x_{17}$ & $x_{18}$ \\
\hline
$B^\#\left(hf(B(\mathsf{a},y))\right)$ & $x_7$ & $x_8$ & $0$ \\
\hline
$B^\#\left(h\A B(\mathsf{a},y)\right)$ & $0$ & $0$ & $x_5$ \\
\hline
$B^\#\left(\varphi(h\gamma \A)B(\mathsf{a},y)\right)$ & $x_{16}$ & $x_{17}$ & $x_{18} + c_1x_5$ \\
\hline
\hline
$\tau$ &  \begin{tikzpicture}[scale=.5]
      [meagre/.style={circle,draw, fill=black!100,thick},
      fat/.style={circle,draw,thick}]
      \node[circle,draw, thick] (j) at (0,0) [label=left:$j$] {};
      \node[circle,draw, fill=black!100,thick] (k) at (1,1) [label=right:$k$] {};
      \node[circle,draw, thick] (l) at (0,2) [label=left:$l$] {};
      \node[circle,draw, fill=black!100,thick] (m) at (1,3) [label=right:$m$] {};
      \draw[-] (j) -- (k);
      \draw[-] (k) -- (l);
      \draw[-] (l) -- (m);
  \end{tikzpicture} & \begin{tikzpicture}[scale=.5]
      [meagre/.style={circle,draw, fill=black!100,thick},
      fat/.style={circle,draw,thick}]
      \node[circle,draw, thick] (j) at (0,0) [label=left:$j$] {};
      \node[circle,draw, thick] (k) at (1,1) [label=right:$k$] {};
      \node[circle,draw, fill=black!100,thick] (l) at (0,2) [label=left:$l$] {};
      \node[circle,draw, fill=black!100,thick] (m) at (1,3) [label=right:$m$] {};
      \draw[-] (j) -- (k);
      \draw[-] (k) -- (l);
      \draw[-] (l) -- (m);
  \end{tikzpicture} & \begin{tikzpicture}[scale=.5]
      [meagre/.style={circle,draw, fill=black!100,thick},
      fat/.style={circle,draw,thick}]
      \node[circle,draw, thick] (j) at (0,0) [label=left:$j$] {};
      \node[circle,draw, thick] (k) at (1,1) [label=right:$k$] {};
      \node[circle,draw, thick] (l) at (0,2) [label=left:$l$] {};
      \node[circle,draw, fill=black!100,thick] (m) at (1,3) [label=right:$m$] {};
      \draw[-] (j) -- (k);
      \draw[-] (k) -- (l);
      \draw[-] (l) -- (m);
  \end{tikzpicture} \\
\hline
$F(\tau)$ & $ \A_{JK}f^K_L\A_{LM}f^M$ & $ \A_{JK}\A_{KL}f^L_Mf^M$ & $\A_{JK}\A_{KL}\A_{LM}f^M$ \\
\hline
$\mathsf{a}(\tau)$ &  $x_{19}$ & $x_{20}$ & $x_{21}$ \\
\hline 
$B^\#\left(hf(B(\mathsf{a},y))\right)$ &$0$ & $0$ & $0$ \\
\hline 
$B^\#\left(h\A B(\mathsf{a},y)\right)$ & $x_6$ & $x_7$ & $x_8$ \\
\hline 
$B^\#\left(\varphi(h\gamma \A)B(\mathsf{a},y)\right)$ & $x_{19} + c_1x_6$ & $x_{20} + c_1x_7 + c_2x_2$ & $x_{21} + c_1x_8 + c_2x_3 + c_3x_1$ \\
\hline 
\end{tabular}
\caption{TW-trees up to order four (part two of two).}
\label{fig:TWtrees2}
\end{center}
\end{figure}

A fundamental component of our derivation of order conditions for exponential-$K$ methods are $B$-series, a way of representing an expansion in trees, or elementary differentials, as a sequence of real numbers.  A mapping $\mathsf{a} : TW \cup \left\{\emptyset\right\} \rightarrow \R$ represents the series
\begin{equation*}
B(\mathsf{a},y) = \mathsf{a(}\emptyset)y + \displaystyle\sum_{\tau \in TW} a(\tau) \frac{h^{\left| \tau \right|}}{\sigma(\tau)}F(\tau)(y).
\end{equation*}
Here $\tau$ are TW-trees; the order $\left|\tau\right|$, and the symmetry $\sigma(\tau)$ of a tree are defined in the same way as for single colored trees \cite{Hairer_book_I, Tokman_2011_EPIRK}, and $F(\tau)$ is the elementary differential belonging to tree $\tau$ as in figure \ref{fig:TWtrees}.  Similarly we introduce the operator $B^\#(f)$, which takes as input a function (which can be represented by a series) and returns the B-series coefficients
\begin{equation*}
 B^\#\left(B(\mathsf{a},y)\right) = \mathsf{a}.
\end{equation*}

Figure \ref{fig:TWtrees} shows all $TW$-trees to order four, the coefficients of a generic $B$-series $B(\mathsf{a},y)$, the result of composing $B(\mathsf{a},y)$ with the function $f(y)$, the result of a multiplication of $B(\mathsf{a},y)$ with the Krylov approximation matrix $\A_n$, and the result of a multiplication of $B(\mathsf{a},y)$ by $\varphi(h\gamma \A_n)$.  The full details of the composition of B-series can be found in \cite{Chartier_2010_Bseries}, while details of products with the Jacobian and $\varphi$-functions can be found in \cite{Butcher_2009_Bseries}.

Because $K$-methods are an extension of $W$-methods we first construct order conditions for the $W$-methods, then prove two lemmas that allow us to obtain the specific exponential-$K$ order conditions.  We follow a similar derivation procedure to that outlined in \cite{Rainwater:2013}.  Throughout the derivation we  track the progress of several truncated B-series which include all terms up to order four. These series have 21 terms corresponding to the TW-trees shown in Figure \ref{fig:TWtrees}.

We begin the construction of order conditions for the $W$-method with a truncated $B$-series for $y_n$
\begin{equation*}
B^\#\left(y(t_0)\right) = \mathsf{a}_0 = \left\{ \mathsf{a}_0(\emptyset) = 1, x_i = 0 ~~\forall i = 1,\dots,21 \right\},
\end{equation*}
and then progress through individual stages of the $W$-method in equation (\ref{eqn:genformW}), making use of the formulas from Figure \ref{fig:TWtrees} to construct the resultant $B$-series of the composition and multiplication operations.  Algorithm \ref{alg:bseries} gives a method of constructing the $B$-series of the numerical solution $y_n$ that approximates the exact solution $y(t_0 + h)$. Note that the sum of two $B$-series is another $B$-series with coefficients equal to the sum of individual coefficients of the series being combined. Similarly, the product of a $B$-series with a scalar is a new series with each coefficient multiplied by the scalar. 
\begin{algorithm}[ht]
\caption{Construction of $B$-series of the numerical solution of a fourth-order, $s$ stage exponential-$W$-method (\ref{eqn:genformW})}
\label{alg:bseries}
\begin{algorithmic}[l]
  \For{ $i = 1, \dots, s$}
	\State $\mathsf{u} = \mathsf{a}_0$
	\For{ $ j = 1, \dots, i-1$}
		\State $\mathsf{u} = \mathsf{u} + \alpha_{i,j}\cdot \mathsf{k}_i$
	\EndFor
  	\State $\mathsf{q} = B^\#\left(hf\left(B(\mathsf{u},y)\right)\right) $
	\For{$ j = 1, \dots, i-1$}
		\State $\mathsf{q} = \mathsf{q} + \gamma_{i,j}\cdot B^\#\left(h\A_n B(\mathsf{k}_i,y)\right)$
	\EndFor
	\State $\mathsf{k}_i = B^\#\left(\varphi(h\gamma \A_n)B(\mathsf{q},y)\right) $
  \EndFor
  \State $\mathsf{a}_n = \mathsf{a}_0$
  \For{ $i = 1, \dots, s$ }
	\State $\mathsf{a}_n = \mathsf{a}_n + b_i\cdot\mathsf{k}_i$
  \EndFor
\end{algorithmic}
\end{algorithm}

The order conditions of the $W$-methods are obtained by matching the $B$-series coefficients of the exact solution $B^\#\left(y(t_n+h)\right)$ with those of the numerical solution $B^\#\left(\mathsf{y}_{n+1}\right)$ up to a specified order.  Keeping in mind that we do not ultimately seek order conditions for a $W$-method itself, that they are simply a means to an end, we look now at the process for obtaining order conditions of the $K$-method from this result.

The extension of the theory of $TW$-trees to $TK$-trees is done in \cite{Tranquilli:2014}.  This extension allows us to ``recolor'' all linear sub-trees (possessing only singly branched nodes) of the $TW$-trees and to substantially reduce the number of required conditions.  This is done using Lemmas \ref{lemma:K-matrix} and \ref{lemma:K-differentials}, taken from  \cite{Tranquilli:2014}, and repeated here without proof.

\begin{lemma}[Property of the Krylov approximate Jacobian \eqref{eqn:approxJac} \cite{Tranquilli:2014}]\label{lemma:K-matrix} 
For any $0 \le k \le M-1$ it holds that
\[
\mathbf{A}_n^k\, f_n = \mathbf{J}_n^k\, f_n\,,
\]
where $M=dim(\K_M)$.
\end{lemma}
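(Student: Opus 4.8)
The plan is to prove the identity by induction on $k$, leveraging two elementary facts about the Krylov construction. First, since the columns of $\V$ are an orthonormal basis of $\K_M$, the matrix $\V\V^T$ is the orthogonal projector onto $\K_M$ (note $\V\V^T \neq \I_N$ here, as $M \ll N$); hence $\V\V^T w = w$ for any $w \in \K_M$. Second, by the very definition of $\K_M$ in \eqref{eqn:krylovspace}, the vectors $f_n, \J_n f_n, \dots, \J_n^{M-1} f_n$ all lie in $\K_M$. Combining these gives the key \emph{reproduction relation} $\V\V^T\,\J_n^{\,j} f_n = \J_n^{\,j} f_n$ whenever $0 \le j \le M-1$, which is the engine of the whole argument.

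The base case $k=0$ is immediate, since $\A_n^0 f_n = f_n = \J_n^0 f_n$. For the inductive step I would assume $\A_n^{k-1} f_n = \J_n^{k-1} f_n$ with $k-1 \le M-1$, and compute, using the definition \eqref{eqn:approxJac} of $\A_n$,
\[
\A_n^k f_n = \A_n\left(\A_n^{k-1} f_n\right) = \A_n\,\J_n^{k-1} f_n = \V\V^T\,\J_n\,\V\V^T\left(\J_n^{k-1} f_n\right).
\]
Applying the reproduction relation with $j = k-1$ collapses the inner projector, leaving $\V\V^T\,\J_n^{\,k} f_n$; a second application with $j = k$ collapses the outer projector to yield $\J_n^{\,k} f_n$, completing the induction. (One could instead start from $\A_n^k = \V\Hb^k\V^T$ as given by Lemma~\ref{lem:powers}, but the inductive projector argument makes the role of the index bound most transparent.)

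The main point requiring care — and the reason the equality is claimed only for $k \le M-1$ rather than for all $k$ — is the second application of the reproduction relation, which needs $\J_n^{\,k} f_n \in \K_M$ and therefore $k \le M-1$. For $k = M$ the vector $\J_n^{\,M} f_n$ generally carries a nonzero component along the next Arnoldi direction $v_{M+1}$ outside $\K_M$, so the outer projector $\V\V^T$ truncates it and the identity breaks down; this is precisely the Arnoldi leakage encoded by the off-diagonal entry $h_{M+1,M}$ in the decomposition $\J_n\V = \V\Hb + h_{M+1,M}\,v_{M+1} e_M^T$. Keeping the two index constraints aligned — $k-1 \le M-1$ for the inductive hypothesis and the inner projector, $k \le M-1$ for the outer projector — is thus the only delicate bookkeeping; everything else is routine.
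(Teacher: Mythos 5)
Your proof is correct, but note that the paper itself offers no proof of this lemma to compare against: it is explicitly ``repeated here without proof'' from the Rosenbrock-K paper \cite{Tranquilli:2014}, so your argument stands as a self-contained justification rather than a variant of one in the text. The two ingredients you isolate are exactly right: $\V\V^T$ is the orthogonal projector onto $\K_M$, and $\J_n^{\,j} f_n \in \K_M$ for $0 \le j \le M-1$ by the definition \eqref{eqn:krylovspace}, so the reproduction relation $\V\V^T \J_n^{\,j} f_n = \J_n^{\,j} f_n$ holds precisely on that index range. Your bookkeeping of the two constraints (inner projector at $j=k-1$, outer projector at $j=k$) correctly locates the binding constraint $k \le M-1$, and your explanation of the failure at $k=M$ via the Arnoldi residual term $h_{M+1,M}\, v_{M+1} e_M^T$ is the right picture (with the caveat, which you flag with ``generally,'' that the identity survives under a happy breakdown). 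The alternative route you mention parenthetically --- starting from Lemma \ref{lem:powers} to write $\A_n^k f_n = \beta\, \V \Hb^k e_1$ and then using the Hessenberg structure of $\Hb$ to show $e_M^T \Hb^{\,j} e_1 = 0$ for $j \le M-2$, whence $\J_n^k f_n = \beta\, \V \Hb^k e_1$ as well --- is the more traditional Krylov-subspace argument and ties the lemma directly to the machinery the paper actually builds (the reduced matrix $\Hb$), but your projector argument is more elementary and, as you say, makes the role of the index bound transparent. No gaps.
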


\begin{lemma}[Property of elementary differentials using the approximation \eqref{eqn:approxJac}  \cite{Tranquilli:2014}]\label{lemma:K-differentials} 
When the Krylov approximation matrix \eqref{eqn:approxJac} is used in equation (\ref{eqn:genformW}), all {\it linear} TW-trees of order $k \le M$ correspond to a single elementary differential, regardless of the color of their nodes.  
\end{lemma}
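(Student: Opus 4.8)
The plan is to reduce the statement to a purely algebraic fact about products of the two matrices $\J_n$ and $\A_n$ acting on $f_n$. Recall that in a \emph{linear} TW-tree every interior node is singly branched and, by the $P$-tree convention, the unique end vertex (the leaf) is meagre; hence a linear tree of order $k$ is a chain of $k-1$ interior nodes sitting above a single meagre leaf. Reading the tree from the root upward, the associated elementary differential is a product
\[
F(\tau) \;=\; M_1\,M_2\cdots M_{k-1}\,f_n, \qquad M_i \in \{\J_n,\A_n\},
\]
where $M_i=\J_n$ if the $i$-th interior node is meagre and $M_i=\A_n$ if it is fat, and the trailing $f_n$ comes from the meagre leaf (cf. the entries $f^J_Kf^K$, $\A_{JK}f^K$, $\A_{JK}\A_{KL}f^L$, $\dots$ in Figures~\ref{fig:TWtrees} and \ref{fig:TWtrees2}). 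The colors of the $k-1$ interior nodes are exactly the free data, so the lemma is equivalent to the claim that \emph{every} such product collapses to the single differential $\J_n^{\,k-1}f_n$ whenever $k\le M$, independently of the coloring.

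I would prove this claim by induction on the order $k$, carrying along the invariant that each partial product lies in the Krylov subspace $\K_M$. The two ingredients are: (a) the factorization $\A_n=\V\,\V^T\,\J_n\,\V\,\V^T$ from \eqref{eqn:approxJac}, in which $\V\,\V^T$ is the orthogonal projector onto $\K_M$ and therefore acts as the identity on any vector of $\K_M$; and (b) the defining property of the Krylov space \eqref{eqn:krylovspace}, namely $\J_n^{\,j}f_n\in\K_M$ for $0\le j\le M-1$. The base case $k=1$ is $f_n=\J_n^0 f_n\in\K_M$. For the inductive step I peel off the root factor: the subtree above the root is itself a linear tree of order $k-1$, so by the induction hypothesis its differential equals $\J_n^{\,k-2}f_n$, which lies in $\K_M$. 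Applying the root matrix $M_1$ then gives $\J_n^{\,k-1}f_n$ at once when $M_1=\J_n$, and when $M_1=\A_n$ we compute
\[
\A_n\,\J_n^{\,k-2}f_n \;=\; \V\,\V^T\,\J_n\,\V\,\V^T\,\J_n^{\,k-2}f_n \;=\; \V\,\V^T\,\J_n^{\,k-1}f_n \;=\; \J_n^{\,k-1}f_n,
\]
closing the induction.

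The only delicate point, and the main obstacle, is the chain of two collapses in the fat-after-meagre case $\A_n\,\J_n^{\,k-2}f_n$ displayed above. The first equality uses that the argument $\J_n^{\,k-2}f_n$ already lies in $\K_M$, so the inner projector disappears, while the second uses $\J_n^{\,k-1}f_n\in\K_M$, so the outer projector disappears; this latter membership is precisely where the hypothesis $k\le M$ (equivalently $k-1\le M-1$) is indispensable and cannot be weakened. Everything else is bookkeeping about the tree-to-product dictionary. I note in passing that specializing to the all-fat coloring recovers Lemma~\ref{lemma:K-matrix}, so the present argument is a mild generalization of it that additionally handles the mixed colorings needed to identify the elementary differentials.
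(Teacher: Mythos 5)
Your proof is correct. Note that the paper itself states Lemma~\ref{lemma:K-differentials} without proof (it is imported from \cite{Tranquilli:2014}), so there is no in-text argument to compare against; your induction supplies exactly the missing content, and it is consistent with the machinery the paper does develop. The key step is sound: for a linear chain the differential is a product $M_1\cdots M_{k-1}f_n$ with $M_i\in\{\J_n,\A_n\}$, and since $\J_n^{\,j}f_n\in\K_M$ for $j\le M-1$ by \eqref{eqn:krylovspace}, both projectors in $\A_n=\V\V^T\J_n\V\V^T$ act as the identity at each stage precisely when $k\le M$, collapsing every coloring to $\J_n^{\,k-1}f_n$; as you observe, the all-fat case recovers Lemma~\ref{lemma:K-matrix}.
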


$TK$-trees are the result of an application of Lemmas \ref{lemma:K-matrix} and \ref{lemma:K-differentials} to reduce the set of $TW$-trees that need to be considered
in the order conditions when the Krylov approximation matrix \eqref{eqn:approxJac} is used \cite{Tranquilli:2014}.  
\begin{definition}[TK-trees  \cite{Tranquilli:2014}]
\begin{eqnarray*}
TK &=& \left\{ TW\textrm{-trees:}  \textrm{ no linear sub-tree has a fat root}  \right\} \\ 
TK(k) &=& \left\{ TW\textrm{-trees:}  \textrm{ no linear sub-tree of order} \right.\\
                                 && \left. \qquad \qquad \textrm{ smaller than or equal to }   k  \textrm{ has a fat root}  \right\} \,.
\end{eqnarray*}
\end{definition}
Figure \ref{fig:TKtrees} shows all $TK$-trees to order four and the corresponding exponential-$K$ order conditions.  Note that there are only nine $TK$-trees as opposed to the original twenty $TW$-trees.  There is only one additional order condition compared to methods which make use of the exact Jacobian, and this condition corresponds to a tree which has a doubly-branched node occurring as a descendant of a fat node.

\begin{remark}
The order conditions given here are only for the exponential-$K$ methods, but the same process can be used to rederive the Rosenbrock-$K$ conditions given in \cite{Tranquilli:2014} through the use of different $c_n$ in the Taylor expansion of $\varphi(h\gamma\A_n)$ in equation (\ref{eqn:varphitaylor}).
\end{remark}

\begin{figure}[htp]
  \label{fig:TKtrees}
  \caption{TK-trees and exponential-$K$ conditions up to order four.}
  \begin{center}
  \begin{tabular}{|c|c|c|c|}
\hline
$\tau$ & $F(\tau)$ & $\Phi(\tau)$ & $P_\tau(\gamma)$ \\
\hline
  \begin{tikzpicture}[scale=.5]
      [meagre/.style={circle,draw, fill=black!100,thick},
      fat/.style={circle,draw,thick}]
      \node[circle,draw, fill=black!100,thick] (j) at (0,0) [label=right:$j$] {};
  \end{tikzpicture} & $ f^J $ & $1$ & $1$ \\
\hline
  \begin{tikzpicture}[scale=.5]
      [meagre/.style={circle,draw, fill=black!100,thick},
      fat/.style={circle,draw,thick}]
      \node[circle,draw, fill=black!100,thick] (j) at (0,0) [label=right:$j$] {};
      \node[circle,draw, fill=black!100,thick] (k) at (1,1) [label=right:$k$] {};
      \draw[-] (j) -- (k);
  \end{tikzpicture} & $ f^J_Kf^K $ & $\sum \beta_{j,k}$ & $1/2\, (1-\gamma)$ \\
\hline
  \begin{tikzpicture}[scale=.5]
      [meagre/.style={circle,draw, fill=black!100,thick},
      fat/.style={circle,draw,thick}]
      \node[circle,draw, fill=black!100,thick] (j) at (1,0) [label=right:$j$] {};
      \node[circle,draw, fill=black!100,thick] (k) at (2,1) [label=right:$k$] {};
      \node[circle,draw, fill=black!100,thick] (l) at (0,1) [label=right:$l$] {};
      \draw[-] (j) -- (k);
      \draw[-] (j) -- (l);
  \end{tikzpicture} & $ f^J_{KL}f^Kf^L $ & $\sum \alpha_{j,k}\alpha_{j,l}$  & $1/3$ \\
\hline
  \begin{tikzpicture}[scale=.5]
      [meagre/.style={circle,draw, fill=black!100,thick},
      fat/.style={circle,draw,thick}]
      \node[circle,draw, fill=black!100,thick] (j) at (0,0) [label=right:$j$] {};
      \node[circle,draw, fill=black!100,thick] (k) at (1,1) [label=right:$k$] {};
      \node[circle,draw, fill=black!100,thick] (l) at (0,2) [label=right:$l$] {};
      \draw[-] (j) -- (k);
      \draw[-] (k) -- (l);
  \end{tikzpicture} & $ f^J_Kf^K_Lf^L $ & $\sum \beta_{j,k}\beta_{k,l}$ & $1/3(1/2-\gamma)(1-\gamma)$ \\
\hline
  \begin{tikzpicture}[scale=.5]
      [meagre/.style={circle,draw, fill=black!100,thick},
      fat/.style={circle,draw,thick}]
      \node[circle,draw, fill=black!100,thick] (j) at (1,0) [label=right:$j$] {};
      \node[circle,draw, fill=black!100,thick] (k) at (2,1) [label=right:$k$] {};
      \node[circle,draw, fill=black!100,thick] (l) at (1,1) [label=above:$l$] {};
      \node[circle,draw, fill=black!100,thick] (m) at (0,1) [label=left:$m$] {};
      \draw[-] (j) -- (k);
      \draw[-] (j) -- (l);
      \draw[-] (j) -- (m);
  \end{tikzpicture} & $ f^J_{KLM}f^Kf^Lf^M $ & $\sum \alpha_{j,k}\alpha_{j,l}\alpha_{jm}$ & $1/4$ \\
\hline
  \begin{tikzpicture}[scale=.5]
      [meagre/.style={circle,draw, fill=black!100,thick},
      fat/.style={circle,draw,thick}]
      \node[circle,draw, fill=black!100,thick] (j) at (1,0) [label=right:$j$] {};
      \node[circle,draw, fill=black!100,thick] (k) at (2,1) [label=right:$k$] {};
      \node[circle,draw, fill=black!100,thick] (l) at (1,2) [label=right:$l$] {};
      \node[circle,draw, fill=black!100,thick] (m) at (0,1) [label=left:$m$] {};
      \draw[-] (j) -- (k);
      \draw[-] (k) -- (l);
      \draw[-] (j) -- (m);
  \end{tikzpicture} & $ f^J_{KM}f^K_Lf^Lf^M $ & $\sum \alpha_{j,k}\beta_{k,l}\alpha_{j,m}$ & $1/8-\gamma/6$ \\
\hline
  \begin{tikzpicture}[scale=.5]
      [meagre/.style={circle,draw, fill=black!100,thick},
      fat/.style={circle,draw,thick}]
      \node[circle,draw, fill=black!100,thick] (j) at (0,0) [label=right:$j$] {};
      \node[circle,draw, fill=black!100,thick] (k) at (1,1) [label=right:$k$] {};
      \node[circle,draw, fill=black!100,thick] (l) at (0,2) [label=above:$l$] {};
      \node[circle,draw, fill=black!100,thick] (m) at (2,2) [label=above:$m$] {};
      \draw[-] (j) -- (k);
      \draw[-] (k) -- (l);
      \draw[-] (k) -- (m);
  \end{tikzpicture} & $ f^J_Kf^K_{LM}f^Lf^M $ & $\sum \alpha_{j,k}\alpha_{k,m}\alpha_{k,l}$ & $1/12$ \\
\hline
  \begin{tikzpicture}[scale=.5]
      [meagre/.style={circle,draw, fill=black!100,thick},
      fat/.style={circle,draw,thick}]
      \node[circle,draw, thick] (j) at (0,0) [label=right:$j$] {};
      \node[circle,draw, fill=black!100,thick] (k) at (1,1) [label=right:$k$] {};
      \node[circle,draw, fill=black!100,thick] (l) at (0,2) [label=above:$l$] {};
      \node[circle,draw, fill=black!100,thick] (m) at (2,2) [label=above:$m$] {};
      \draw[-] (j) -- (k);
      \draw[-] (k) -- (l);
      \draw[-] (k) -- (m);
  \end{tikzpicture} & $ \mathbf{A}_{JK}f^K_{LM}f^Lf^M $ & $\sum \gamma_{j,k}\alpha_{k,m}\alpha_{k,l}$ & $-\gamma/6$ \\
\hline
  \begin{tikzpicture}[scale=.5]
      [meagre/.style={circle,draw, fill=black!100,thick},
      fat/.style={circle,draw,thick}]
      \node[circle,draw,fill=black!100, thick] (j) at (0,0) [label=left:$j$] {};
      \node[circle,draw, fill=black!100,thick] (k) at (1,1) [label=right:$k$] {};
      \node[circle,draw, fill=black!100,thick] (l) at (0,2) [label=left:$l$] {};
      \node[circle,draw, fill=black!100,thick] (m) at (1,3) [label=right:$m$] {};
      \draw[-] (j) -- (k);
      \draw[-] (k) -- (l);
      \draw[-] (l) -- (m);
  \end{tikzpicture} & $ f^J_Kf^K_Lf^L_Mf^M $ & $\sum \beta_{j,k}\beta_{k,l}\beta_{l,m}$ & $1/4(1/3-\gamma) (1/2-\gamma) (1-\gamma)$ \\
\hline
  \end{tabular}
  \end{center}
\end{figure}

\begin{theorem}[Order conditions for exponential-$K$ methods]\label{thm:K1-conditions}
An exponential-$K$ method has order $p$ iff the underlying Krylov space \eqref{eqn:krylovspace} has dimension $M \ge p$, and the following order conditions hold:
\label{eqn:ROK-conditions}
\begin{eqnarray}
\label{eqn:ROK-condition-T}
\sum_j b_j\, \Phi_j(\tau) = P_\tau(\tau) \quad \forall\, \tau \in TK ~~ \mbox{with } \left|\tau \right| \le p\,.
\end{eqnarray}
Here $\left|\tau \right|$ is the order, or number of vertices of the tree $\tau$, and $\Phi_j(\tau)$ and $P_\tau(\tau)$ are  computed using Algorithm \ref{alg:bseries}; 
they are shown in Figure \ref{fig:TKtrees} for $p \leq 4$.
\end{theorem}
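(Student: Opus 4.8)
The plan is to treat the exponential-$K$ method as an exponential-$W$ method whose matrix is the specific Krylov approximation $\A_n$ of \eqref{eqn:approxJac}, and to match the $B$-series of the numerical solution against that of the exact flow over all trees up to order $p$. First I would invoke the classical $B$-series criterion: a one-step method attains order $p$ if and only if the $B$-series coefficients of $\mathsf{y}_{n+1}$ and of the exact solution $y(t_n+h)$ coincide for every tree of order $\le p$. The numerical coefficients are generated symbolically by Algorithm \ref{alg:bseries}, where each stage composes with $f$, multiplies by $\A_n$, and applies $\vp(h\g\A_n)$ using the tabulated rules of Figures \ref{fig:TWtrees}--\ref{fig:TWtrees2}. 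Carrying this out and equating with the exact coefficients yields the full exponential-$W$ order-condition system over all $21$ $TW$-trees, with the $\g$-dependent targets recorded through the $c_i$ terms of the Taylor expansion \eqref{eqn:varphitaylor} of $\vp$.

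The heart of the argument is the collapse of this $21$-condition $W$-system onto the $9$-condition $K$-system of Figure \ref{fig:TKtrees}, for which I would lean on Lemmas \ref{lemma:K-matrix} and \ref{lemma:K-differentials}. For the \textbf{sufficiency} direction, assume $M \ge p$ and that the conditions of Figure \ref{fig:TKtrees} hold. Lemma \ref{lemma:K-differentials} guarantees that every linear $TW$-subtree of order $\le M$ produces the same elementary differential regardless of node colour, so that any $TW$-tree of order $\le p \le M$ is identified with the unique $TK$-tree obtained by recolouring its linear subtrees to have a meagre (solid) root. Under this identification the $21$ $W$-conditions are no longer independent: each collapses onto one of the $9$ $TK$-conditions, so the surplus conditions are automatically satisfied. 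Hence matching the $TK$-coefficients alone forces the numerical and exact $B$-series to agree through order $p$, and the method has order $p$.

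For \textbf{necessity}, suppose the method has order $p$. Agreement of the $B$-series through order $p$ immediately forces \eqref{eqn:ROK-condition-T} for every $TK$-tree of order $\le p$. It remains to show the dimension bound is forced. Consider the tall linear tree of order $p$ (the chain of $p$ nodes): its exact elementary differential is $\J_n^{\,p-1} f_n$, while the numerical solution generates the corresponding fat-rooted term through the $\vp$-expansion in $\A_n$, namely $\A_n^{\,p-1} f_n$. By Lemma \ref{lemma:K-matrix} these agree only when $p-1 \le M-1$, that is $M \ge p$; if $M < p$ one exhibits an ODE for which $\A_n^{\,p-1} f_n \ne \J_n^{\,p-1} f_n$, leaving an uncancelled error at order $p$ and contradicting the assumed order. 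This closes the equivalence.

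The step I expect to be the main obstacle is the bookkeeping in the sufficiency direction: verifying that the recolouring induced by Lemma \ref{lemma:K-differentials} sends the $21$ $TW$-conditions onto \emph{exactly} the $9$ listed $TK$-conditions, with every surplus $W$-condition becoming a consequence of a $K$-condition rather than an independent constraint. In particular one must confirm that the single genuinely new $TK$-tree --- the one carrying a doubly branched node as a descendant of a fat node --- survives the recolouring and is not absorbed, and that the $\g$-dependent right-hand sides $P_\tau$ are precisely those obtained by propagating the $c_i$ of \eqref{eqn:varphitaylor} through Algorithm \ref{alg:bseries}. The remaining work is the routine symbolic evaluation of the $\Phi_j(\tau)$ and $P_\tau$ already tabulated in Figure \ref{fig:TKtrees}.
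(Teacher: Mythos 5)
Your proposal is correct and follows essentially the same route as the paper, whose own proof is only a one-line appeal to the exponential-$W$ order theory of \cite{Hochbruck:1998} and \cite[Theorem 7.7]{Hairer_book_II} together with the preceding discussion of Lemmas \ref{lemma:K-matrix} and \ref{lemma:K-differentials}; your write-up simply makes that outline explicit, including the $M\ge p$ necessity via the tall linear tree. One small imprecision: the surplus $TW$-conditions are not ``automatically satisfied'' individually --- rather, trees sharing an elementary differential under the Krylov approximation have their $B$-series coefficients \emph{summed}, so only the aggregated $TK$-conditions (e.g., $\sum b_j\beta_{jk}$ in place of separate $\sum b_j\alpha_{jk}$ and $\sum b_j\gamma_{jk}$) need to hold, which is exactly what your identification of recoloured trees delivers.
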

\begin{proof}
The proof follows from our discussion, the near equivalence of order conditions for exponential-W and Rosenbrock-W methods \cite{Hochbruck:1998}, and from the order conditions of Rosenbrock-W methods \cite[Theorem 7.7]{Hairer_book_II}.
\qquad \qed
\end{proof}

\begin{remark}[Stiff order conditions.]
This section has developed classical order conditions that explain the accuracy of the methods on non-stiff problems. The behavior of the methods when applied to very stiff problems may be different, e.g., true to order reduction. A stiff order conditions theory for exponential methods has been proposed by Luan and Ostermann \cite{Luan:2014}. The development of stiff order conditions for exponential-K methods falls outside the scope of this paper.
\end{remark}

\begin{remark}[Stability considerations.]
The numerical stability of exponential-K solutions depends on the choice of Krylov space. Intuitively, the size of the K space should be large enough to cover the stiff subspace of the system.Note that traditional exponential methods focus on accurate computations of matrix-exp-vector products (e.g., by monitoring residuals), but do not account explicitly for the impact of Krylov approximations on stability. The large number of basis vectors required to achieve accurate matrix function vector products favors stability. In our case a small K dimension ensures accuracy, so the stability needs to be considered separately.  An automatic procedure to select dimension such as to achieve stability is important, but falls outside the scope here. 
\end{remark}

\section{An exponential-K method of order four}
\label{sec:newmethods}

We now construct an exponential-$K$ method of order four.  As before we consider the case where $\gamma_{i,i} = \gamma$ for all stages $i$ and denote
\begin{equation*}
 \beta_{i,j} = \alpha_{i,j} + \gamma_{i,j}, \quad \beta_i' = \displaystyle\sum_{j=1}^{i-1}\beta_{i,j}.
\end{equation*}
The following nine non-linear equations arise from the order conditions of a four stage, fourth order exponential-$K$ method 
\begin{equation}
\label{eqn:ocfours}
\renewcommand{\arraystretch}{1.5}
\begin{array}{clclcl}
	(a) & b_1 + b_2 + b_3 + b_4 & = & 1 & & \\
	(b) & b_2 \beta_2' + b_3 \beta_3' + b_4 \beta_4' & = & \frac{1}{2}(1 - \gamma) & = & p_{21}(\gamma)\\
	(c) & b_2 \alpha_2^2 + b_3 \alpha_3^2 + b_4 \alpha_4^2 & = &  \frac{1}{3} & & \\
	(d) & b_3(\beta_{3,2} \beta_2') + b_4(\beta_{4,2}\beta_2' + \beta_{4,3}\beta_3') & = & \frac{1}{3}(\frac{1}{2}-\gamma)(1-\gamma)& = & p_{3,2}(\gamma) \\
	(e) & b_2 \alpha_2^3 + b_3\alpha_3^3 + b_4\alpha_4^3 & = & \frac{1}{4} & & \\
	(f) & b_3\alpha_{3,2}\beta_2' + b_4(\alpha_{4,2}\beta_2' + \alpha_{4,3}\beta_3) & = & \frac{1}{8} - \frac{1}{6} \gamma & = & p_{4,2}(\gamma) \\
	(g_1) & b_3\alpha_{3,2}\alpha_2^2 + b_4(\alpha_{4,2}\alpha_2^2 + \alpha_{4,3}\alpha_3^2) & = & \frac{1}{12} & & \\
	(g_2) & b_3\gamma_{3,2}\alpha_2^2 + b_4(\gamma_{4,2}\alpha_2^2 + \gamma_{4,3}\alpha_3^2) & = & -\frac{1}{6}\gamma & &  \\
	(h) & b_4\beta_{4,3}\beta_{3,2}\beta_{2}' & = & \frac{1}{4}(\frac{1}{3}-\gamma)(\frac{1}{2}-\gamma)(1-\gamma)& = & p_{4,4}(\gamma)
\end{array}
\end{equation}
If we now set
\begin{equation*}
 p_{4,3}(\gamma) = \frac{1}{12}-\frac{1}{6}\gamma,
\end{equation*}
we can follow exactly the solution procedure given in \cite{Tranquilli:2014} for obtaining the {\sc rok}4a method, where we make use of the $p_{i,j}$ given above, and as suggested in \cite{Hochbruck:1998} to guarantee exact solutions for linear ODEs choose $\gamma$ as the reciprocal of an integer.  For the {\sc expK} method given in Table \ref{table:expK4coef} we make the arbitrary choices 
\begin{equation*}
 \gamma = \frac{1}{4}, \quad b_3 = 0, \quad \alpha_2 = 1, \quad \alpha_3 = \alpha_4 = \frac{1}{2}, \quad \beta_{4,3} = -\frac{1}{4}.
\end{equation*}
%
%
%
%
\begin{table}
\begin{center}
\renewcommand{\arraystretch}{1.5}
  \begin{tabular}{|rcrrcr|}
\hline
\multicolumn{6}{|l|}{   $\gamma = \frac{1}{4} $} \\
\hline
   $\alpha_{2,1}$ & $=$ & $1$ 			& $\gamma_{2,1}$ & $=$ & $\frac{7}{8}$ \\
   $\alpha_{3,1}$ & $=$ & $\frac{41}{80}$ 	& $\gamma_{3,1}$ & $=$ & $\frac{1}{16}$ \\
   $\alpha_{3,2}$ & $=$ & $\frac{1}{80}$ 	& $\gamma_{3,2}$ & $=$ & $0$ \\
   $\alpha_{4,1}$ & $=$ & $\frac{1}{4}$		& $\gamma_{4,1}$ & $=$ & $-\frac{1}{32}$ \\
   $\alpha_{4,2}$ & $=$ & $\frac{1}{12}$	& $\gamma_{4,2}$ & $=$ & $\frac{1}{24}$ \\
   $\alpha_{4,3}$ & $=$ & $\frac{1}{6}$ 	& $\gamma_{4,3}$ & $=$ & $-\frac{5}{12}$ \\
\hline
   $b_1$ 	 & $=$ & $\frac{1}{6}$ 	& $\widehat{b}_1$   & $=$ & $\frac{8}{3}$ \\
   $b_2$	 & $=$ & $\frac{1}{6}$ 	& $\widehat{b}_2$   & $=$ & $1$ \\
   $b_3$	 & $=$ & $0$		& $\widehat{b}_3$   & $=$ & $-\frac{8}{3}$ \\
   $b_4$	 & $=$ & $\frac{2}{3}$ 	& $\widehat{b}_4$   & $=$ & $0$		     \\
\hline
  \end{tabular}
\caption{Coefficients of {\sc expK}, a fourth order exponential-$K$ method.}
\label{table:expK4coef}
\end{center}
\end{table}
%

\section{Alternative implementations of existing exponential methods}
\label{sec:alternative}

We now consider alternative implementations  of previously derived methods {\sc exp4} \cite{Hochbruck:1998} and {\sc erow4} \cite{Hochbruck:2009}. These reformulations make use of only a single Krylov subspace projection per time step and exploit the B-series analysis of Section \ref{sec:orderconditions}.

The method  {\sc exp4} \cite{Hochbruck:1998} has the alternative formulation:
\begin{subequations}
\label{eqn:exp4}
\begin{eqnarray}
  && k_1 = \varphi_1\left(\frac{1}{3}h\A_n\right)f(y_n), \quad k_2 = \varphi_1(\frac{2}{3}h\A_n)f(y_n), \quad k_3 = \varphi_1(h\A_n)f(y_n),\\
  && w_4 = \frac{-7}{300}k_1 + \frac{97}{150}k_2 - \frac{37}{300}k_3, \\
  && u_4 = y_n + hw_4, \quad d_4 = f(u_4)-f(y_n) - h\A_n w_4, \\
  && k_4 = \varphi_1\left(\frac{1}{3}h\A_n\right)d_4, \quad k_5 = \varphi_1(\frac{2}{3}h\A_n)d_4, \quad k_6 = \varphi_1(h\A_n)d_4, \\
  && w_7 = \frac{59}{300}k_1 - \frac{7}{75}k_2 + \frac{269}{300}k_3 + \frac{2}{3}\left(k_4 + k_5 + k_6\right), \\
  && u_7 = y_n + hw_7, \quad d_7 = f(u_7) - f(y_n) - h\A_n w_7, \\
  && k_7 = \varphi_1\left(\frac{1}{3}h\A_n\right)d_7, \\
  && y_{1} = y_n + h\left(k_3 + k_4 - \frac{4}{3}k_5 + k_6 + \frac{1}{6}k_7\right).
\end{eqnarray}
\end{subequations}
The method {\sc erow4} \cite{Hochbruck:2009} has the alternative formulation
\begin{subequations}
\label{eqn:erow4}
\begin{eqnarray}
 && k_1 = \varphi_1(\frac{1}{2} h \A_n)f(y_n), \\
 && w_2 = \frac{1}{2}k_1, \\
 && u_2 = y_n + hw_2, \quad  d_2 = f(u_2) - f(y_n) - h \A_n w_2, \\
 && k_2 = \varphi_1(h\A_n)f(y_n), \quad k_3 = \varphi_1(h\A_n)d_2, \\
 && w_4 = k_2 + k_3, \\
 && u_4 = y_n + hw_4, \quad  d_4 = f(u_4) - f(y_n) - h \A_n w_4, \\
 && k_4 = \varphi_3(h\A_n)d_2, \quad k_5 = \varphi_4(h\A_n)d_2, \quad k_6 = \varphi_3(h\A_n)d_4, \quad k_7 = \varphi_4(h\A_n)d_4, \\
 && y_{n+1} = y_n + h\left(k_2 + 16k_4 - 48k_5 -2k_6 + 12k_7\right)
 \end{eqnarray}
\end{subequations}
We implement these methods in three different forms: first, in the standard way outlined in the literature \cite{Tokman_2011_EPIRK, Tokman_2006_EPI,Hochbruck:1998}; second, entirely in the reduced space such as given in \eqref{eqn:exp4}, \eqref{eqn:erow4}, and in \cite{Tranquilli:2014}; and finally, using only a single Krylov projection to approximate the $\varphi$ functions.  
These implementations are discussed below.

\subsection{Standard implementation}

The primary feature of a standard implementation of an exponential method is the approximation of $\varphi$ functions using Krylov subspaces.  For a term of the form $\varphi(h\A_n)b$ 
this is done by projecting $\varphi(h\A_n)$ and $b$ onto the space $\K_M(\A_n, b) = \textrm{span}\left\{b, \A_n b, \A_n^2 b, \dots, \A_n^{M-1} b\right\}$ as follows
\begin{equation}
\label{eqn:proj1}
 \varphi(h\A_n)b \approx \V\V^T \varphi(h\A_n) \V\V^Tb.
\end{equation}
Note that  $V^T b = \Vert b \Vert_2 e_1$, where $e_1$ is the first canonical basis vector.  
Making use of equation (\ref{eqn:arnH}) we obtain the final Krylov subspace approximation
\begin{equation}
 \label{eqn:standardprojection}
 \varphi(h\A_n)b \approx \Vert b \Vert_2 V\varphi(h\Hb)e_1\,.
\end{equation}
This approximation is computed as in \cite{Sidje:1998}, in which the exponential of an augmented matrix $\widetilde{\Hb}$ is constructed  \cite{Higham:2005} and (\ref{eqn:standardprojection}) is read off from this result.

\begin{remark}
 The standard implementation requires the construction of a new Krylov space for each vector $b$ operated on by a $\varphi$ function, as well as the evaluation of a small matrix exponential to compute each $\varphi(h \gamma \Hb)$ function.  Both {\sc exp}4 and {\sc erow}4 require the construction of three Krylov subspaces and the evaluation of seven small matrix exponentials.
\end{remark}

\subsection{K-type implementation}

K-type implementations of {\sc exp}4 and {\sc erow}4 follow the style of \cite{Tranquilli:2014} and Section 1.  For each $k_i \in \R^N$ we create a corresponding $\lambda_i = \V^Tk_i \in \R^M$, similarly $\sigma_i = \V^Tw_i \in \R^M$, and evaluate all linear algebra operations, including Jacobian-vector products, in the reduced space.  Further, we construct only a single Krylov subspace and perform full matrix computations of three $\varphi$ function evaluations in the case of {\sc exp}4, and four in the case of {\sc erow}4.

The $K$-type implementation of {\sc exp4}, called {\sc exp4k}, is:
\begin{subequations}
\label{eqn:exp4k}
\begin{eqnarray}
\renewcommand{\arraystretch}{1.5}
  \label{eqn:exp4k-a}
  && \nonumber \psi_0 = \V^T f(y_n), \quad f^{\perp}_0 = f - \V\psi_0, \\
  && \nonumber \lambda_1 = \varphi_1(\frac{1}{3}h\Hb)\psi_0, \quad \lambda_2 = \varphi_1(\frac{2}{3}h\Hb)\psi_0, \quad \lambda_3 = \varphi_1(h\Hb)\psi_0, \\
  && k_1 = \V\lambda_1 + f^{\perp}_0, \quad k_2 = \V\lambda_2 + f^{\perp}_0, \quad k_3 = \V\lambda_3 + f^{\perp}_0, \\
  && w_4 = \frac{-7}{300}k_1 + \frac{97}{150}k_2 - \frac{37}{300}k_3, \quad \sigma_4 = \frac{-7}{300}\lambda_1 + \frac{97}{150}\lambda_2 - \frac{37}{300}\lambda_3, \\
  && u_4 = y_n + hw_4, \quad \psi_4 = \V^Tf(u_4), \quad f^{\perp}_4 = f(u_4) - \V\psi_4, \quad \delta_4 = \psi_4 - \psi_0 - h\Hb\sigma_4,\\
  && \nonumber \lambda_4 = \varphi_1(\frac{1}{3} h \Hb)\delta_4, \quad \lambda_4 = \varphi_1(\frac{1}{3} h \Hb)\delta_4, \quad  \lambda_4 = \varphi_1(\frac{1}{3} h \Hb)\delta_4,\\
  && k_4 = \V\lambda_4 + f^{\perp}_4 - f^{\perp}_0, \quad k_5 = \V\lambda_5 + f^{\perp}_4 - f^{\perp}_0, \quad k_6 = \V\lambda_6 + f^{\perp}_4 - f^{\perp}_0, \\
  && \nonumber w_7 = \frac{59}{300}k_1 - \frac{7}{75}k_2 + \frac{269}{300}k_3 + \frac{2}{3}\left(k_4 + k_5 + k_6\right), \\   
  && \sigma_7 = \frac{59}{300}\lambda_1 - \frac{7}{75}\lambda_2 + \frac{269}{300}\lambda_3 + \frac{2}{3}\left(\lambda_4 + \lambda_5 + \lambda_6\right), \\
  && u_7 = y_n + hw_7, \quad \psi_7 = \V^Tf(u_7), \quad f^{\perp}_7 = f(u_7) - V\psi_7, \quad \delta_7 = \psi_7 - \psi_0 - h\Hb\sigma_7,\\
  && \lambda_7 = \varphi_1(\frac{1}{3}h\Hb)\delta_7, \quad k_7 = \V^T\lambda_7 + f^{\perp}_7 - f^{\perp}_0, \\
  && y_{1} = y_n + h\left(k_3 + k_4 - \frac{4}{3}k_5 + k_6 + \frac{1}{6}k_7\right).
\end{eqnarray}
\end{subequations}

The $K$-type implementation of {\sc erow4}, called {\sc erow4k}, is:
\begin{subequations}
\label{eqn:erow4k}
\begin{eqnarray}
  && \nonumber \psi_0 = \V^Tf(y_n), \quad f^{\perp}_0 = f(y_n) - \V\psi_0, \\
  && \lambda_1 = \varphi_1(\frac{1}{2}h\Hb)\psi_0, \quad k_1 = \V\lambda_1 + f^{\perp}_0, \\
  && w_2 = \frac{1}{2}k_1, \quad \sigma_2 = \frac{1}{2}\lambda_1, \\
  && u_2 = y_n + hw_2, \quad \psi_2 = \V^Tf(u_2), \quad f^{\perp}_2 = f(u_2) - \V\psi_2, \quad \delta_2 = \psi_2 - \psi_0 - h\Hb\sigma_2, \\
  && \nonumber \lambda_2 = \varphi_1(h\Hb)\psi_0, \quad k_2 = \V\lambda_2 + f^{\perp}_0, \\
  &&  \lambda_3 = \varphi_1(h\Hb)\delta_2, \quad k_3 = \V\lambda_3 + f^{\perp}_2 - f^{\perp}_0, \\
  && w_4 = k_2 + k_3, \quad \sigma_4 = \lambda_2 + \lambda_3, \\
  && u_4 = y_n + hw_4, \quad \psi_4 = \V^Tf(u_4), \quad f^{\perp}_4 = f(u_4) - V\psi_4, \quad \delta_4 = \psi_4 - \psi_0 - h\Hb\sigma_4, \\
  && \nonumber \lambda_4 = \varphi_3(h\Hb)\delta_2, \quad  k_4 = \V\lambda_4 + \frac{1}{3!}\left( f^{\perp}_2 - f^{\perp}_0\right), \\
  && \nonumber \lambda_5 = \varphi_4(h\Hb)\delta_2,, \quad k_5 = \V\lambda_5 + \frac{1}{4!}\left( f^{\perp}_2 - f^{\perp}_0\right), \\
  && \nonumber \lambda_6 = \varphi_3(h\Hb)\delta_4, \quad k_6 = \V\lambda_6 + \frac{1}{3!}\left(f^{\perp}_4 - f^{\perp}_0\right), \\
  && \lambda_7 = \varphi_4(h\Hb)\delta_4, \quad k_7 = \V\lambda_7 + \frac{1}{4!}\left(f^{\perp}_4 - f^{\perp}_0\right), \\
  && y_{n+1} = y_n + h\left(k_2 + 16k_4 - 48k_5 -2k_6 + 12k_7\right)
\end{eqnarray}
\end{subequations}

\subsection{Single projection implementation}

The results of Section \ref{sec:orderconditions} imply that a single Krylov subspace need to be computed per time step guarantee the order of accuracy.
In contradistinction the standard implementation constructs several Krylov spaces, primarily due to the use of residuals indicating how accurately the matrix function times vector products have been approximated.  In the single projection implementation we construct only the one subspace, and similarly to $K$-type implementation compute the full matrix result of the $\varphi$ functions.  The implementation differs from the $K$-type implementation in that the linear algebra operations, including Jacobian-vector products, are computed in the full space.

The single projection implementation of {\sc exp4}, called {\sc exp4sp}, is:
\begin{subequations}
\label{eqn:exp4sp}
\begin{eqnarray}
  && \nonumber \psi_0 = \V^Tf(y_n), \quad f^{\perp}_0 = f(y_n) - \V\psi_0, \\
  && \nonumber \lambda_1 = \varphi_1(\frac{1}{3}h\Hb)\psi_0, \quad \lambda_2 = \varphi_1(\frac{2}{3}h\Hb)\psi_0, \quad \lambda_3 = \varphi_1(h\Hb)\psi_0, \\
  && k_1 = \V\lambda_1 + f^{\perp}_0, \quad k_2 = \V\lambda_2 + f^{\perp}_0, \quad k_3 = \V\lambda_3 + f^{\perp}_0, \\
  && w_4 = \frac{-7}{300}k_1 + \frac{97}{150}k_2 - \frac{37}{300}k_3, \\
  && u_4 = y_n + hw_4, \quad d_4 = f(u_4) - f(y_n) - h\J_n w_4,\\
  && \nonumber \psi_4 = \V^Td_4, \quad d_4^{\perp} = d_4 - \V\psi_4, \\
  && \nonumber \lambda_4 = \varphi_1(\frac{1}{3}h\Hb)\psi_4, \quad \lambda_5 = \varphi_1(\frac{2}{3}h\Hb)\psi_4, \quad \lambda_6 = \varphi_1(h\Hb)\psi_4, \\
  && k_4 = \V\lambda_4 + d_4^{\perp}, \quad k_5 = \V\lambda_5 + d_4^{\perp}, \quad k_6 = \V\lambda_6 + d_4^{\perp}, \\
  && w_7 = \frac{59}{300}k_1 - \frac{7}{75}k_2 + \frac{269}{300}k_3 + \frac{2}{3}\left(k_4 + k_5 + k_6\right), \\
  && u_7 = y_n + hw_7, \quad d_7 = f(u_7) - f(y_n) - h\J_n w_7, \\ 
  && \nonumber \psi_7 = \V^Td_7, \quad d_7^{\perp} = d_7 - \V\psi_7, \\
  && \lambda_7 = \varphi_1(\frac{1}{3}h\Hb)\psi_7, \quad k_7 = \V\lambda_7 + d_7^{\perp}, \\
  && y_{1} = y_n + h\left(k_3 + k_4 - \frac{4}{3}k_5 + k_6 + \frac{1}{6}k_7\right).
\end{eqnarray}
\end{subequations}
The single projection implementation of {\sc erow4}, called {{\sc erow4sp}, is:
\begin{subequations}
\label{eqn:erow4sp}
\begin{eqnarray}
  && \nonumber \psi_0 = \V^Tf(y_n), \quad f_0^{\perp} = f(y_n) - \V\psi_0, \\
  && \lambda_1 = \varphi_1(\frac{1}{2}h\Hb)f(y_n), \quad k_1 = \V\lambda_1 + f_0^{\perp}, \\
  && w_2 = \frac{1}{2}k_1, \\
  && u_2 = y_n + hw_2, \quad d_2 = f(u_2) - f(y_n) - h\J_n w_2, \\
  && \nonumber \psi_2 = V^Td_2, \quad d_2^{\perp} = d_2 - \V\psi_2, \\
  && \nonumber \lambda_2 = \varphi_1(h\Hb)\psi_0, \quad k_2 = \V\lambda_2 + f_0^{\perp}, \\
  && \lambda_3 = \varphi_1(h\Hb)\psi_2, \quad k_3 = \V\lambda_3 + d_2^{\perp}, \\
  && w_4 = k_2 + k_3, \\
  && u_4 = y_n + hw_4, \quad d_4 = f(u_4) - f(y_n) - h\J_n w_4, \\
  && \nonumber \psi_4 = V^Td_4, \quad d_4^{\perp} = d_4 - \V\psi_4, \\
  && \nonumber \lambda_4 = \varphi_3(h\Hb)d_2, \quad k_4 = \V\lambda_4 + \frac{1}{3!}d_2^{\perp}, \quad \lambda_5 = \varphi_4(h\Hb)d_2, \quad k_5 = \V\lambda_5 + \frac{1}{4!}d_2^{\perp}, \\
  && \lambda_6 = \varphi_3(h\Hb)d_4, \quad k_6 = \V\lambda_6 + \frac{1}{3!}d_4^{\perp}, \quad \lambda_7 = \varphi_4(h\Hb)d_4, \quad k_7 = \V\lambda_7 + \frac{1}{4!}d_4^{\perp}, \\
  && y_{n+1} = y_n + h\left(k_2 + 16k_4 - 48k_5 -2k_6 + 12k_7\right).
\end{eqnarray}
\end{subequations}

\subsection{Accuracy analysis of alternative implementations}

Using the approach described by Algorithm \ref{alg:bseries} we construct B-series representations of the numerical solutions produced by {\sc exp4k}, {\sc exp4sp}, {\sc erow4k}, and {\sc erow4sp}.  Table \ref{table:Banalysis} shows the B-series coefficients for up to fourth order.  Note that coefficients associated to various trees change not only for different methods but also for different formulations of the same method.

The critical coefficient is that belonging to $\tau_{13}$, i.e., corresponding to the $K$ order condition.  Table \ref{table:Banalysis} reveals that {\sc exp4k} is fourth order, while both {\sc exp4sp}, {\sc erow4K}, and {\sc erow4sp} are only third order.  These analytical results are confirmed experimentally in the next section.

\begin{table}
\begin{center}
\renewcommand{\arraystretch}{1.5}
 \begin{tabular}{|c|c|c|c|c|c|c|}
  \hline
  $i$ & $F(\tau_i)$ & {\sc exp4k} &{\sc exp4sp} &{\sc erow4k} & {\sc erow4sp} & Exact Solution\\
  \hline
  $1$ 	& $f^J$ 			& $1$ & $1$ & $1$ & $1$ & $1$ \\
  \hline
  $2$ 	& $f^J_Kf^K$ 			& $\frac{1}{2}$& 0 & $\frac{1}{2}$ & 0 & \multirow{2}{*}{$\frac{1}{2}$} \\
  \cline{1-6}
  $3$ 	& $\A_{JK}f^K$ 			& 0 & $\frac{1}{2}$ & $0$ & $\frac{1}{2}$ & \\
  \hline
  $4$ 	& $f^J_{KL}f^Kf^L$ 		& $\frac{1}{3}$ & $\frac{1}{3}$ & $\frac{1}{3}$ & $\frac{1}{3}$ & $\frac{1}{3}$\\
  \hline
  $5$ 	& $f^J_K f^K_L f^L$ 		&$\frac{1}{6}$ & 0 & $\frac{1}{12}$ & 0 & \multirow{4}{*}{$\frac{1}{6}$}\\
  \cline{1-6}
  $6$ 	& $f^J_K\A_{KL}f^L$ 		&$\frac{1}{120}$ & 0 & $\frac{1}{12}$ & 0 &  \\
  \cline{1-6}
  $7$ 	& $\A_{JK}f^K_Lf^L$ 		&-$\frac{1}{36}$ & 0 & $\frac{1}{15}$ & 0 & \\
  \cline{1-6}
  $8$ 	& $\A_{JK}\A_{KL}f^L$		&$\frac{7}{360}$ & $\frac{1}{6}$ & $-\frac{1}{15}$ & $\frac{1}{6}$ &\\
  \hline
  $9$ 	& $f^J_{KLM}f^Kf^Lf^M$		&$\frac{1}{4}$ & $\frac{1}{4}$ & $\frac{1}{4}$ & $\frac{1}{4}$ &  $\frac{1}{4}$ \\
  \hline
  $10$ 	& $f^J_{KL}f^L_Mf^Mf^K$		&$\frac{1}{6}$ & 0 & $\frac{1}{12}$ & 0 & \multirow{2}{*}{$\frac{1}{8}$}\\
  \cline{1-6}
  $11$	& $f^J_{KL}\A_{LM}f^Mf^K$	&$-\frac{1}{24}$& $\frac{1}{8}$ & $\frac{1}{24}$ & $\frac{1}{8}$ & \\
  \hline
  $12$	& $f^J_Kf^K_{LM}f^Mf^L$		&$\frac{1}{12}$& 0 & $\frac{1}{24}$ & 0 & $\frac{1}{12}$\\
  \hline
  $13$ 	& $\A_{JK}f^K_{LM}f^Mf^L$	&0& $\frac{1}{12}$ & $\frac{1}{24}$ & $\frac{1}{12}$ & 0 \\
  \hline
  $14$ 	& $f^J_Kf^K_Lf^L_Mf^M$		&0& 0 & 0 & 0 & \multirow{8}{*}{$\frac{1}{24}$}\\
  \cline{1-6}
  $15$	& $f^J_Kf^K_L\A_{LM}f^M$	&$\frac{1}{20}$& 0 & $\frac{1}{48}$ & 0 & \\
  \cline{1-6}
  $16$	& $f^J_K\A_{KL}f^L_Mf^M$	&$\frac{1}{18}$& 0 & $\frac{1}{24}$ & 0 & \\
  \cline{1-6}
  $17$	& $f^J_K\A_{KL}\A_{LM}f^M$	&$-\frac{1537}{24300}$& 0 & $-\frac{1}{48}$ & 0 & \\
  \cline{1-6}
  $18$	& $\A_{JK}f^K_Lf^L_Mf^M$	&$\frac{1}{36}$& 0 & $\frac{1}{120}$ & 0 & \\
  \cline{1-6}
  $19$	& $\A_{JK}f^K_L\A_{LM}f^M$	&$-\frac{23}{720}$& 0 & $\frac{1}{80}$ & 0 & \\
  \cline{1-6}
  $20$	& $\A_{JK}\A_{KL}f^L_Mf^M$	&$-\frac{1}{27}$ & 0 & $-\frac{1}{60}$ & 0 & \\
  \cline{1-6}
  $21$	& $\A_{JK}\A_{KL}\A_{LM}f^M$	&$\frac{3943}{97200}$ & $\frac{1}{24}$ & $-\frac{1}{240}$ & $\frac{1}{24}$ &  \\
  \hline
 \end{tabular}
\end{center}
\caption{B-series expansion of the numerical solution for different exponential methods and implementations. } 
\label{table:Banalysis}
\end{table}

\section{Numerical Results}
\label{sec:results}

We perform numerical tests using a nonlinear ODE model, the two-dimensional shallow water equations, and the two-dimensional Allen-Cahn problem. While we have constructed both $K$- and $SP$- type implementations of {\sc exp4} and {\sc erow4}, we present performance comparisons of only the fourth order methods {\sc expK}, {\sc exp4}, {\sc exp4K}, and {\sc erow4}, since the third order methods perform the same amount of work as fourth order methods but yield lower accuracy.

\subsection{Lorenz-96 model}
The chaotic Lorenz-96 model \cite{Lorenz1996} has $N=40$ states, periodic boundary conditions, and is described by the following equations:
\begin{eqnarray}
\label{LorenzModel}
\frac{dy_j}{dt} &=& -y_{j-1}\; \left(y_{j-2}-y_{j+1}\right)-y_j + F \;,
\quad j = 1, \ldots, N~,\\
\nonumber
y_{-1} &=& y_{N-1}~, \quad  y_{0} = y_N ~, \quad  y_{N+1} = y_{1}~.
\end{eqnarray}
The forcing term is $F=8.0$, with $t \in [0, \ 0.3]$ (time units).
%
\begin{figure}[htp]
\centering
\includegraphics[width=5in]{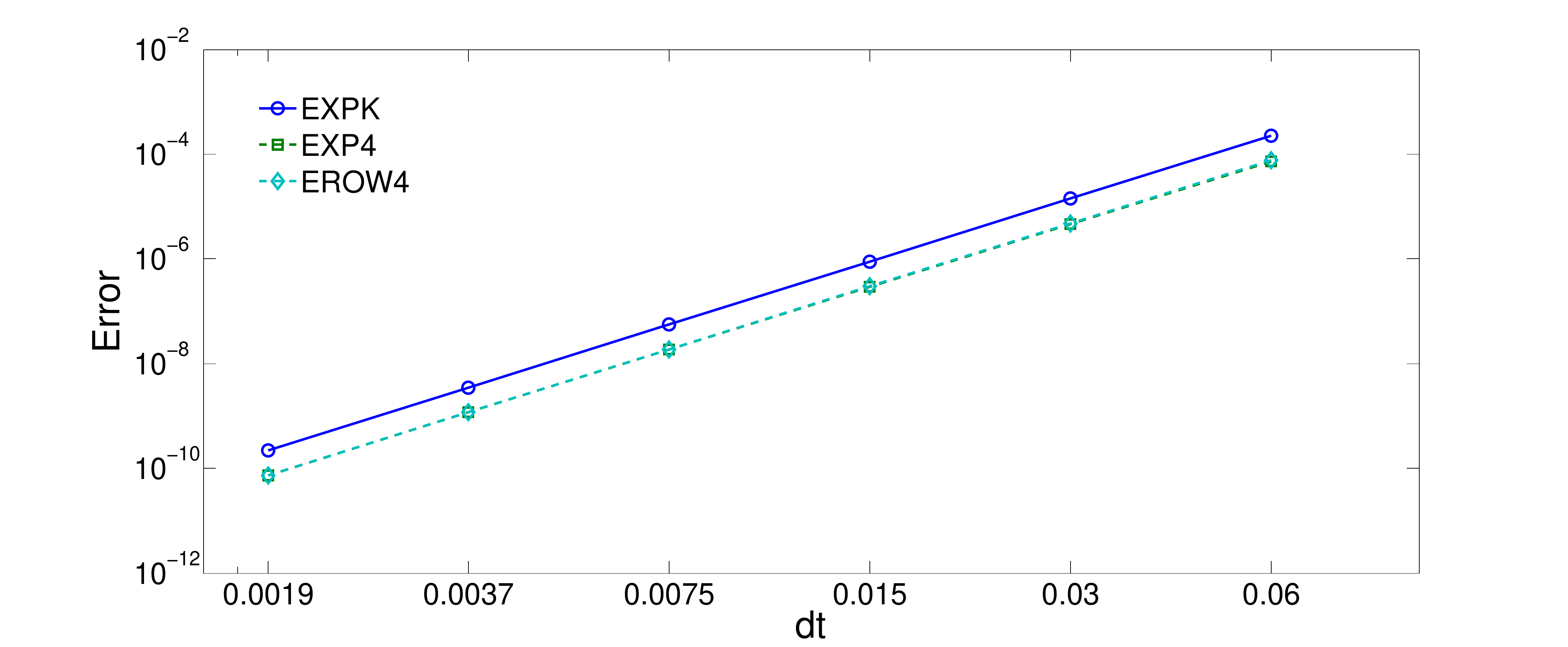} \\
\caption{Precision diagrams for different exponential methods in standard implementation applied to the Lorenz-96 test problem. {\sc expK} use a Krylov space of dimension $M = 5$.}
\label{fig:lorenzconvergence}
\end{figure}

%
\begin{table}
\begin{center}
\begin{tabular}{|c|c|c|c|}
\hline
		&	Standard 	&	K-type	&	SP-type \\
\hline
{\sc expK}	&	--		&	3.99	& 	--	\\
\hline
{\sc exp4}	&	3.98		&	3.97	&	2.97	\\
\hline
{\sc erow4}	&	4.00		&	2.97	&	2.96	\\
\hline
\end{tabular}
\caption{Convergence rates for all methods and implementations applied to the Lorenz-96 model.}
\label{table:lorenzconvergence}
\end{center}
\end{table}
%

Figure \ref{fig:lorenzconvergence} shows the precision diagrams for {\sc expK} with $M = 5$, and for the standard implementations of {\sc exp4} and {\sc erow4}.
All methods show the theoretical convergence order four.
The performance of different implementations of {\sc exp4} and {\sc erow4} are shown in Figures \ref{fig:lorenzconvergenceexp4} and \ref{fig:lorenzconvergenceerow4}, respectively.
The results confirm the lower orders of alternative implementations predicted by the B-series analysis presented in table \ref{table:Banalysis}.
The convergence rates for all methods applied to the Lorenz-96 model are summarized in Table \ref{table:lorenzconvergence} .


\begin{figure}[hp]
\centering
\subfigure[{\sc exp4}]{
\includegraphics[width=0.475\textwidth,height=0.3\textwidth]{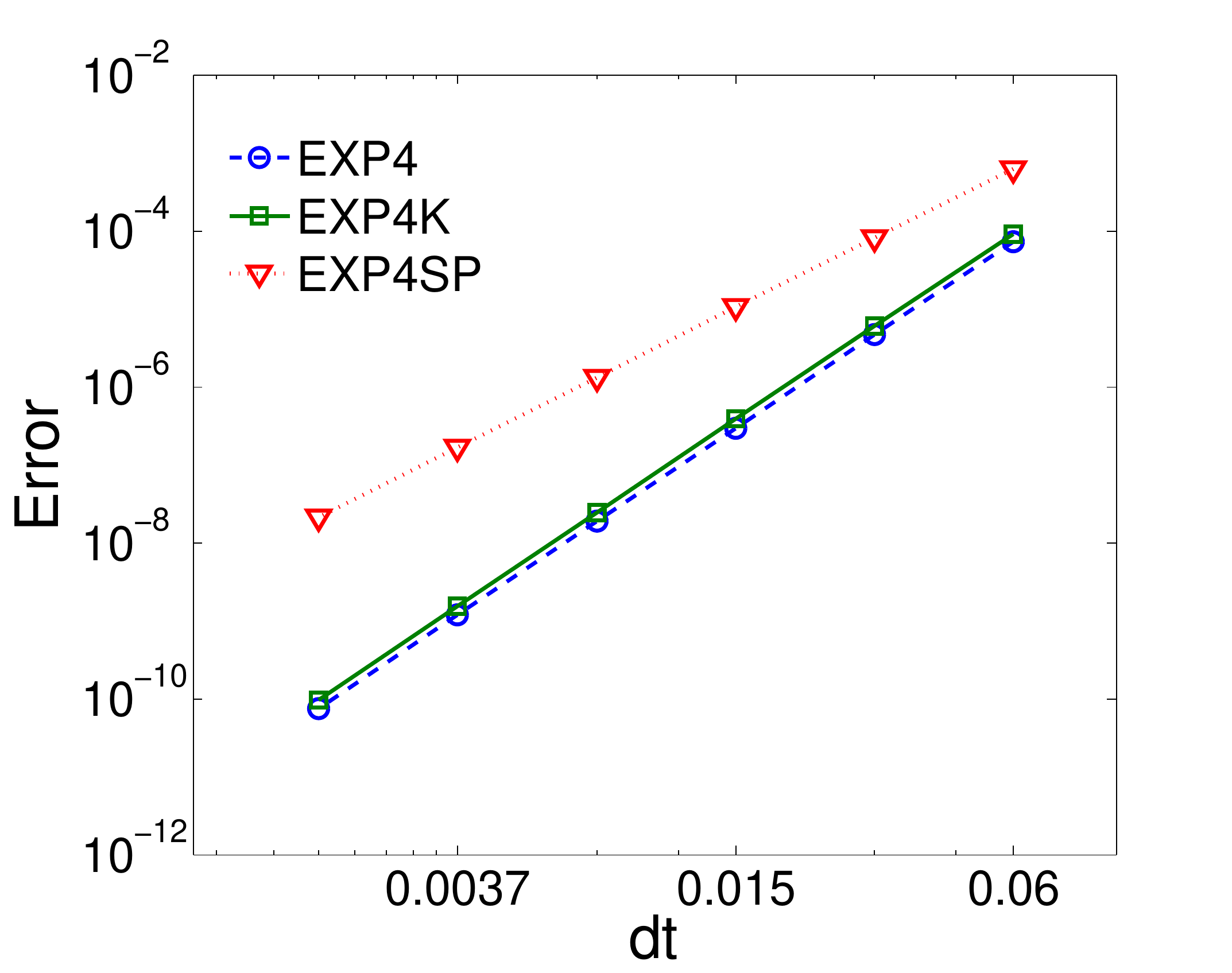}
\label{fig:lorenzconvergenceexp4}
}
\subfigure[{\sc erow4} ]{
\includegraphics[width=0.475\textwidth,height=0.3\textwidth]{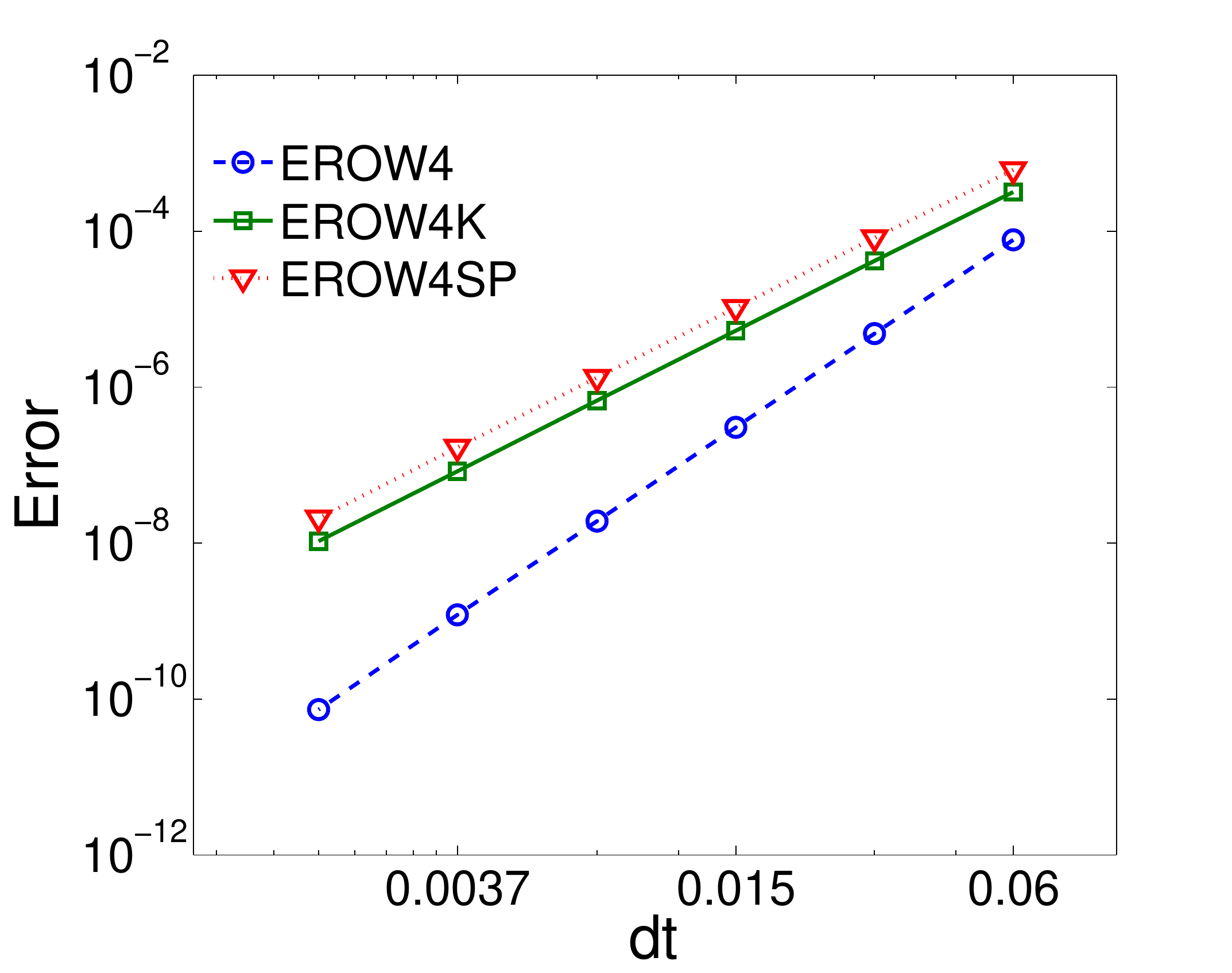}
\label{fig:lorenzconvergenceerow4}
}
\caption{Work-precision diagrams for different implementations of traditional exponential integrators applied to the  Lorenz-96 test problem \eqref{LorenzModel}.}
\label{fig:lorenzprecision}
\end{figure}

\subsection{Shallow water equations}

We examine the relative performance of the methods on the two-dimensional shallow water equations \cite{Liska97compositeschemes},
a hyperbolic system of partial differential equations
\begin{subequations}
\label{eqn:shallowwater}
\begin{eqnarray}
 \frac{\partial}{\partial t} h + \frac{\partial}{\partial x} (uh) + \frac{\partial}{\partial y} (vh) &=& 0, \\
 \frac{\partial}{\partial t} (uh) + \frac{\partial}{\partial x} \left(u^2 h + \frac{1}{2} g h^2\right) + \frac{\partial}{\partial y} (u v h) &=& 0,  \\
 \frac{\partial}{\partial t} (vh) + \frac{\partial}{\partial x} (u v h) + \frac{\partial}{\partial y} \left(v^2 h + \frac{1}{2} g h^2\right) &=& 0, 
\end{eqnarray}
\end{subequations}
where $u(x,y,t)$, $v(x,y,t)$ are the flow velocity components and $h(x,y,t)$ is the fluid height.  After spatial discretization using centered finite differences \eqref{eqn:shallowwater} is brought to the standard ODE form  (\ref{eqn:ode}) with
\begin{equation*}
y = \left[ u \, \, v \, \, h\right]^T \in \R^{N}, \quad f_y(t,y) = \mathbf{J} \in \R^{N \times N}.
\end{equation*}

The standard exponential integrators compute the product $\varphi(h\A_n)b$ with an adaptive basis size to guarantee accuracy and the comparisons include the cost of the extra residual computations required to do so, while the $K$- type implementations use a constant basis size chosen empirically for stability.  Automatic selection of Krylov basis size for stability is an open problem, and is the subject of future work. A special subroutine was implemented to compute exact Jacobian-vector products using a matrix-free approach.

\begin{figure}[hp]
\centering
\subfigure[$N = 3 \times 32 \times 32$]{
\includegraphics[width=0.475\textwidth,height=0.3\textwidth]{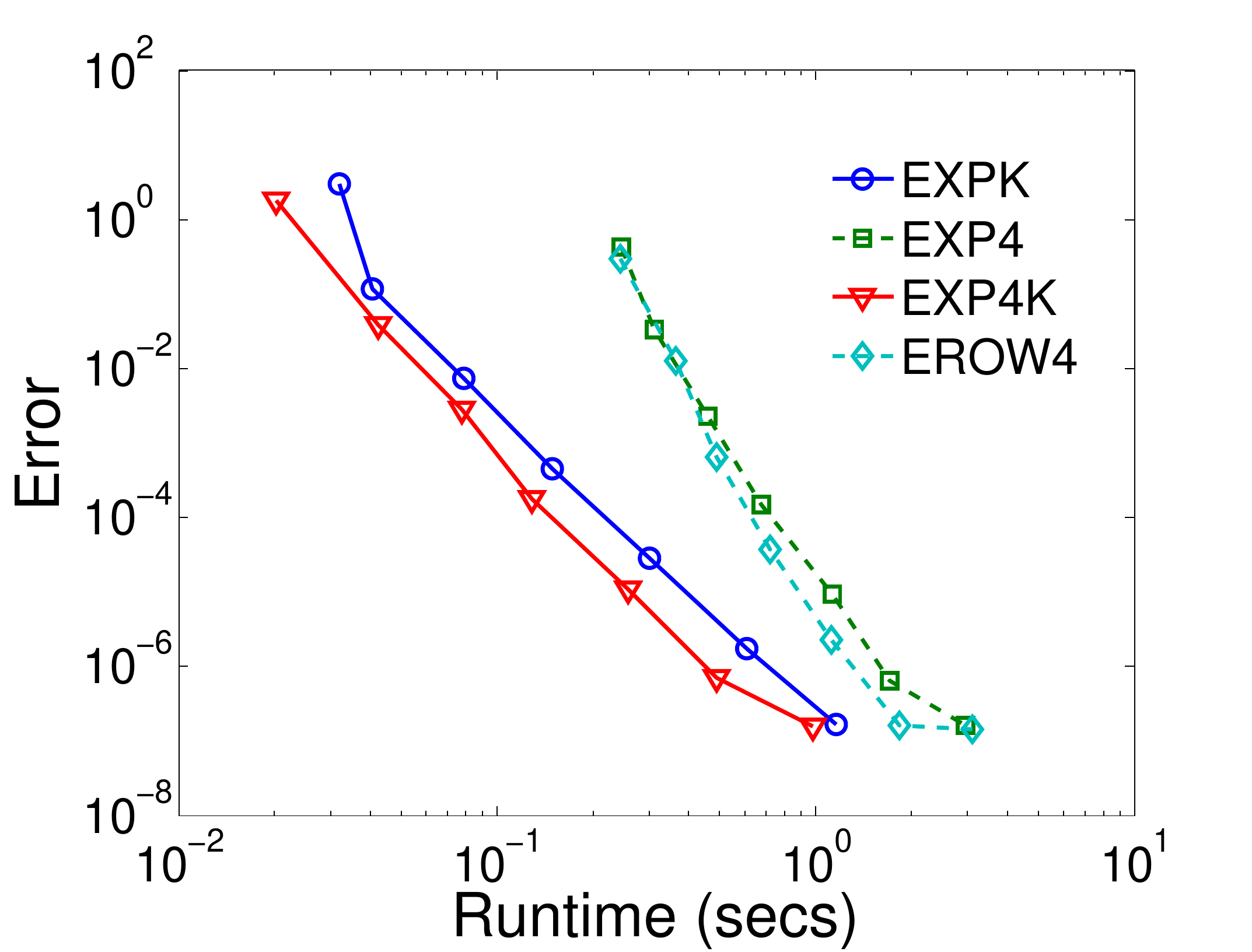}
\label{fig:sweprecision32}
}
\subfigure[$N = 3 \times 128 \times 128$]{
\includegraphics[width=0.475\textwidth,height=0.3\textwidth]{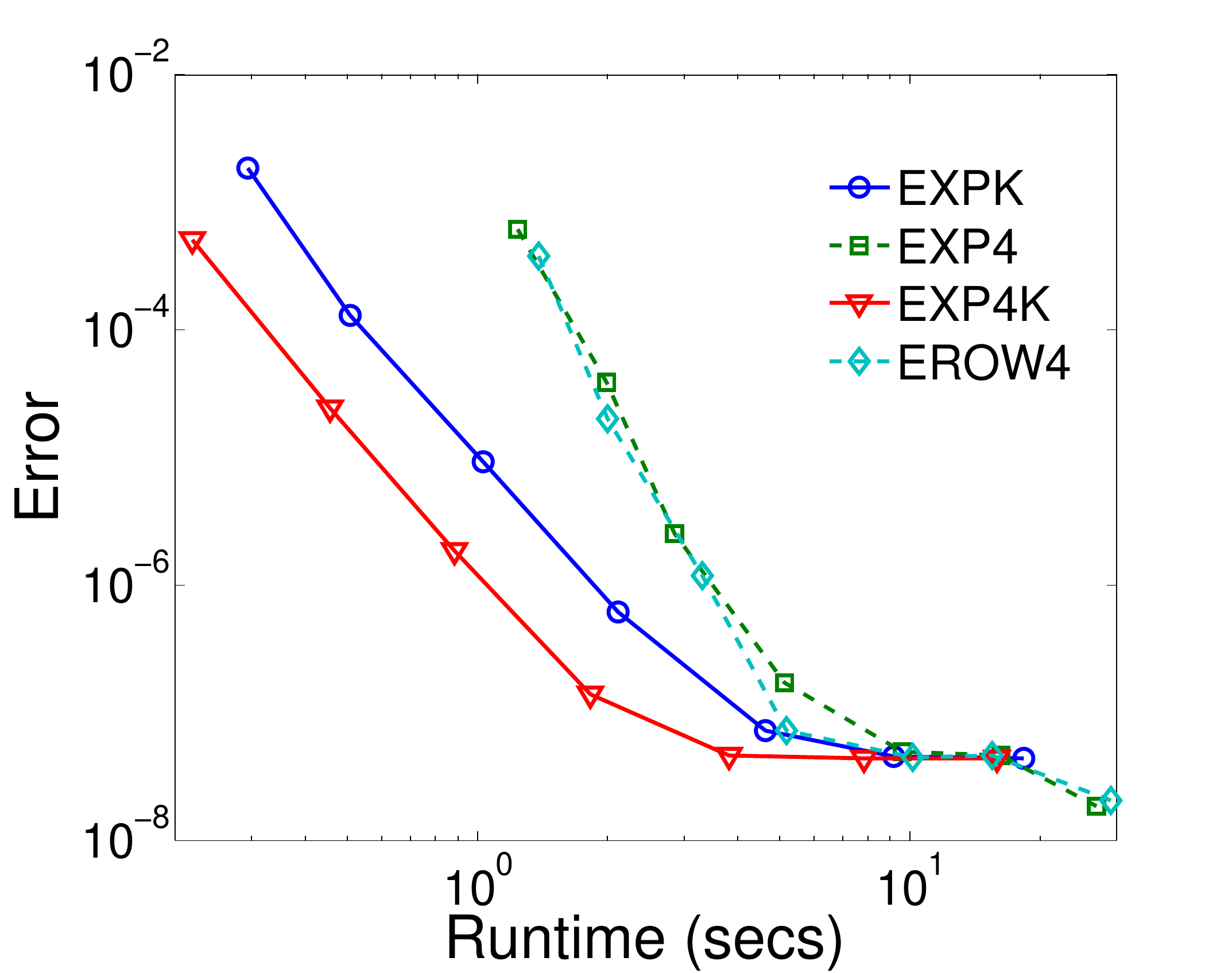}
\label{fig:sweprecision128}
}
\caption{Work-precision diagrams for exponential integrators applied to the shallow water test problem \eqref{eqn:shallowwater}.
Different problem sizes results from different spatial resolutions.}
\label{fig:sweprecision}
\end{figure}


Figures \ref{fig:sweprecision32} and \ref{fig:sweprecision128} show a performance comparison of the standard and $K$-type implementations of {\sc expK}, {\sc exp4}, and {\sc erow4}. 
Two grid sizes of $32 \times 32$ and $128 \times 128$ points are considered.  In both cases the $K$-type implementations are more efficient for lower error tolerances, while the adaptivity of the standard implementations allows them to `catch up' in performance as the errors decrease.


\subsection{The Allen-Cahn problem}

For further performance comparison we consider the two-dimensional Allen-Cahn system, a parabolic partial differential equation 
\begin{eqnarray}
\label{eqn:allencahn}
&& \frac{\partial}{\partial t} u = \alpha\nabla^2 u + \gamma\left(u - u^3\right), \quad (x,y) \in [0,1] \times [0,1], \quad  t \in [0, 0.2], 
\end{eqnarray}
with $\alpha = 0.1$ and $\gamma = 1.0$. The problem has homogeneous Neumann boundary conditions and the initial solution 
$u(t=0) = 0.4 + 0.1(x + y) + 0.1\sin(10x)\sin(20y)$.
Unlike the shallow water equations, the reaction-diffusion problem \eqref{eqn:allencahn} is stiff.

The standard implementations of various methods make use of an adaptive Krylov basis size while the $K$-type implementations use empirically selected basis sizes.  Figures \ref{fig:acprecision50} and \ref{fig:acprecision150} compares the performance of the standard and $K$-type implementations for different problem sizes.  We once again see a better efficiency of the $K$-type methods for lower error values, but with a much earlier break-even point for efficiency.  
This is due primarily to the spectrum of the diffusion operator, and the difference in stability and accuracy requirements between the shallow water and Allen-Cahn equations.  In the case of Allen-Cahn the stability requirements are more strict than the accuracy considerations and so the multiple smaller projections of the standard implementation become more efficient than the single larger projection in the $K$-type method, even though the latter uses fewer overall basis vectors.

\begin{figure}[hp]
\centering
\subfigure[$N = 50\times 50$]{
\includegraphics[width=0.475\textwidth,height=0.3\textwidth]{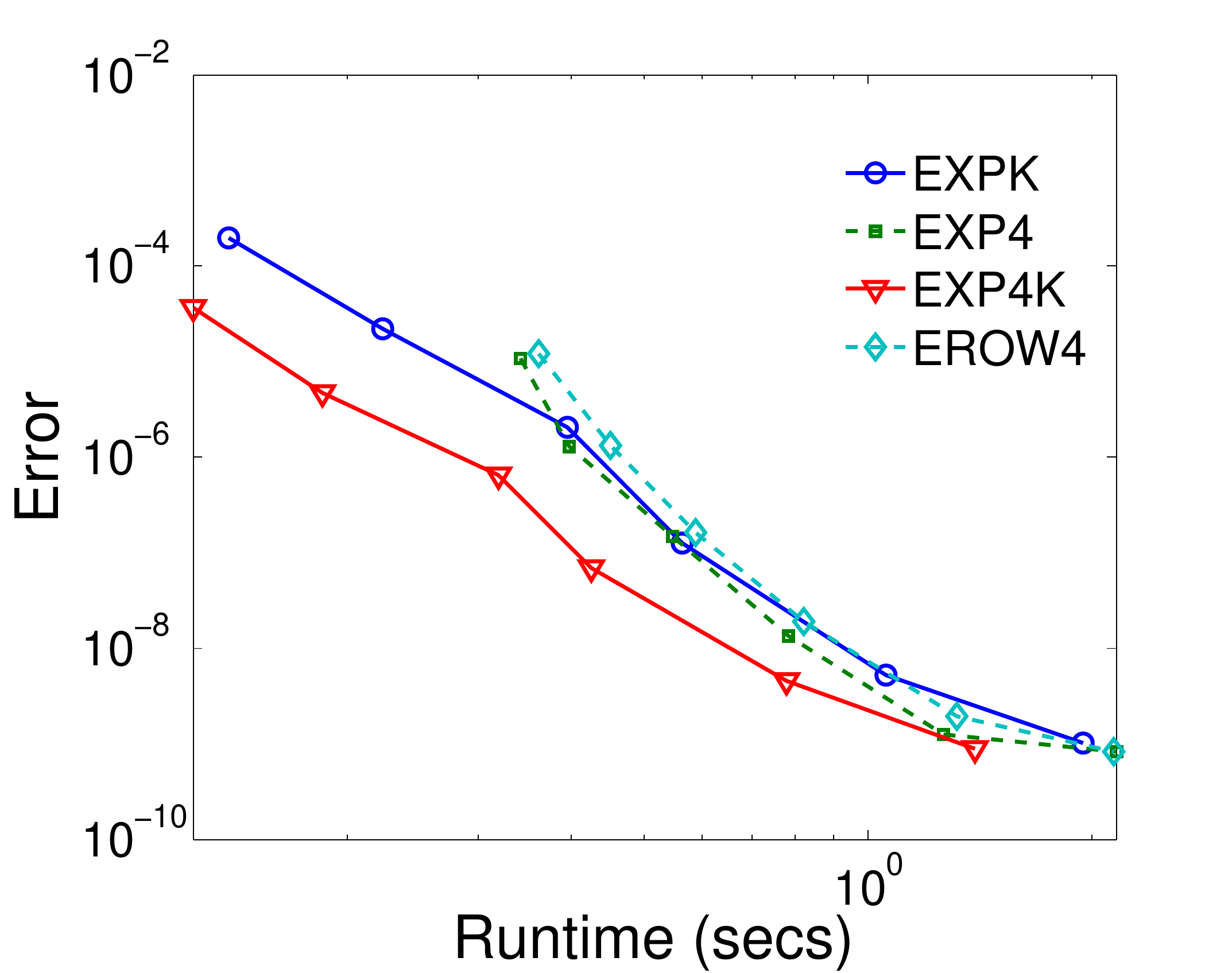}
\label{fig:acprecision50}
}
\subfigure[$N = 150\times 150$]{
\includegraphics[width=0.475\textwidth,height=0.3\textwidth]{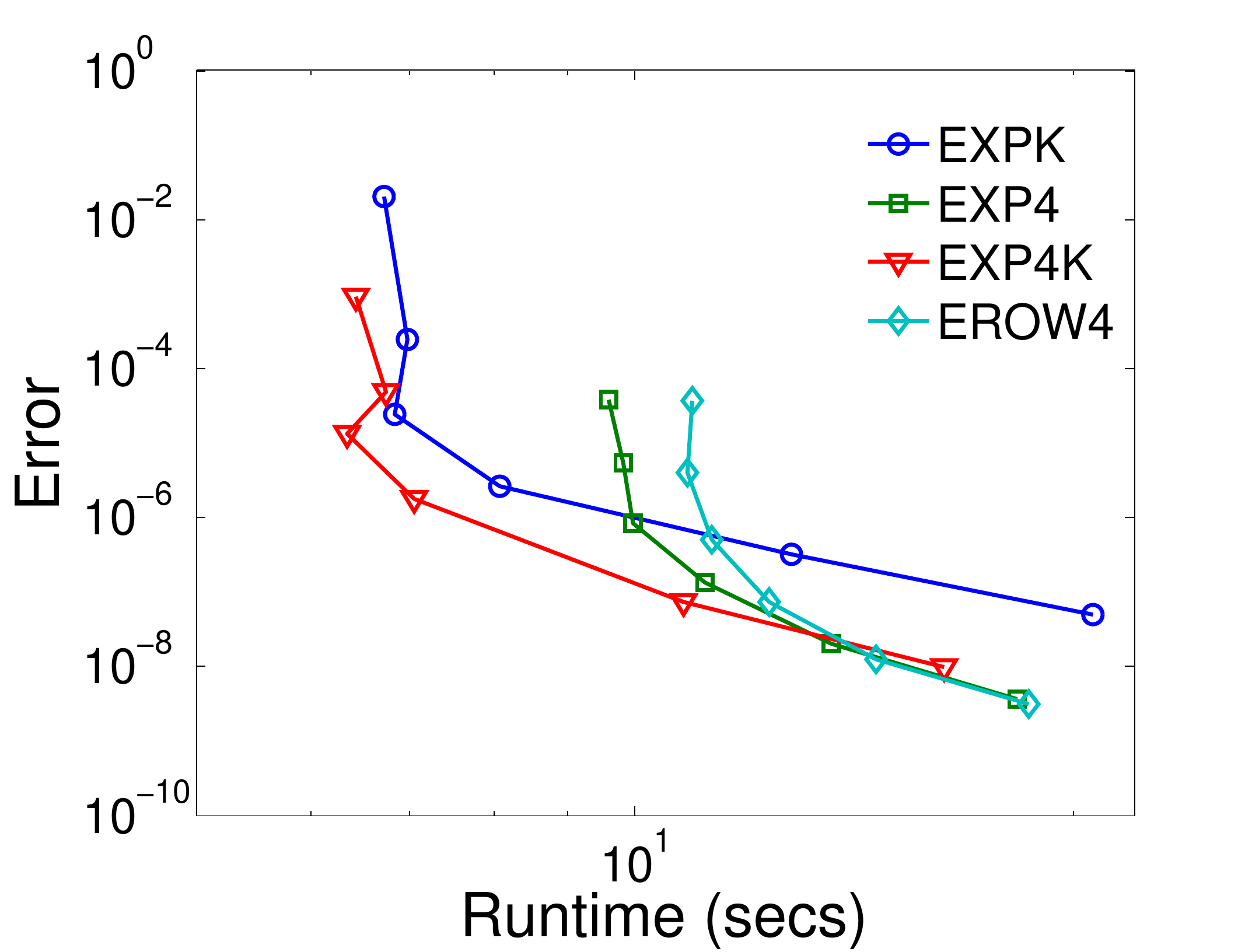}
\label{fig:acprecision150}
}
\caption{Work-precision diagrams for exponential integrators applied to the Allen-Cahn test problem \eqref{eqn:allencahn}.
Different problem sizes result from different spatial resolutions. }
\label{fig:acprecision}
\end{figure}

%

\section{Conclusions}\label{sec:conclusions}

This work extends the $K$-method approach proposed in \cite{Tranquilli:2014} to exponential integrators and develops
the new family of exponential-$K$ schemes. A rigorous framework for order conditions analysis is developed that accounts for both temporal truncation errors and
Krylov approximation errors. We construct an exponential-K method based on the general form given in \cite{Hochbruck:1998},
and reformulate existing exponential methods in order to take advantage of the reduced workload permitted by the new analysis.

Numerical experiments are carried out with three test problems, an ordinary differential equation and hyperbolic and parabolic partial differential equations. The results indicate that the new $K$-type exponential methods have the potential to be more efficient than their classical counterparts.
While the new $K$- method {\sc expK} derived here does not appear to be more efficient than the previously existing methods, primarily due to a less efficient general form, it validates the order conditions theory of exponential-$K$ methods.  We have shown that the traditional {\sc exp4} method satisfies the additional order four $K$-condition when reformulated as a $K$-method, and that the resulting {\sc exp4k} scheme is more efficient than previous methods for the test problems presented here.

Future work will focus on developing a methodology to automatically select the Krylov subspace size in order to guarantee numerical stability, as on the construction of new exponential methods that can take full advantage of the inherent benefits present in the $K$-type formulation.

\section*{Acknowledgements}

This work has been supported in part by NSF through awards NSF CMMI--1130667, 
NSF CCF--1218454, NSF CCF--0916493, 
 AFOSR FA9550--12--1--0293--DEF, AFOSR 12-2640-06,
and by the Computational Science Laboratory at Virginia Tech.

\appendix
\section{Order conditions for exponential-W methods.}

\begin{theorem}[Order conditions for exponential-$W$ methods]\label{thm:W-conditions}
An exponential-$W$ method with general form (\ref{eqn:genformW}) has order $p$ iff the following order conditions hold:
\label{eqn:expw-conditions}
\begin{eqnarray}
\label{eqn:expw-condition-T}
\sum_j b_j\, \Phi_j(\tau) = P_\tau(\tau) \quad \forall\, \tau \in TW ~~ \mbox{with } \left|\tau \right| \le p\,.
\end{eqnarray}
Here $\left|\tau \right|$ is the order, or number of vertices of the tree $\tau$, and $\Phi_j(\tau)$ and $P_\tau(\tau)$ can be computed using Algorithm \ref{alg:bseries}; 
they are shown in Figure \ref{table:expwcond} for $p \leq 4$.
\end{theorem}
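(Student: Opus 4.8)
The plan is to establish the equivalence by comparing, term by term, the $B$-series of the numerical solution produced by \eqref{eqn:genformW} with that of the exact solution, both expanded over the set $TW$ of two-colored trees. The governing principle is classical: the method has order $p$ exactly when $y_{n+1}$ reproduces $y(t_n+h)$ through terms of order $h^p$, and since Section~\ref{sec:orderconditions} represents both quantities as $B$-series indexed by $TW$-trees, this agreement is equivalent to the equality of the two families of coefficients for every $\tau$ with $|\tau|\le p$.

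First I would assemble the coefficients of the numerical solution by running Algorithm~\ref{alg:bseries} symbolically. Starting from $\mathsf{a}_0 = B^\#(y_n)$ and propagating through the $s$ stages with the three elementary operations tabulated in Figures~\ref{fig:TWtrees} and \ref{fig:TWtrees2} --- composition with $hf(\cdot)$, left multiplication by $h\A_n$, and left multiplication by $\varphi(h\gamma\A_n)$ --- each stage produces $\mathsf{k}_j(\tau)=\Phi_j(\tau)$, so that $\mathsf{a}_n(\tau)=\sum_j b_j\,\Phi_j(\tau)$ is the left-hand side of the claimed identity. The only place where the particular integrator enters is through the Taylor coefficients $c_k$ of $\varphi$: the $\varphi$-multiplication rule couples the coefficient of a fat-rooted tree to those of the trees obtained by stripping fat nodes from the root, exactly the $c_k$-terms appearing in the bottom rows of Figures~\ref{fig:TWtrees} and \ref{fig:TWtrees2}.

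Next I would identify the target coefficients. Because the exact solution of \eqref{eqn:ode} is a function of $f$ alone and contains no occurrence of $\A_n$, its $B$-series is supported only on the meagre (single-colored) trees, where it takes the classical density value $1/\gamma(\tau)$, and vanishes on every tree carrying at least one fat vertex. Equating these exact coefficients with $\mathsf{a}_n(\tau)$ --- in which the $\varphi$-generated $c_k$-contributions may be collected onto either side of the equation --- yields the tabulated order conditions. At this point I would invoke the near-equivalence between the exponential-$W$ and Rosenbrock-$W$ expansions noted in \cite{Hochbruck:1998}: both are instances of \eqref{eqn:genformW} with a scalar $\varphi$ whose Taylor series has leading coefficient one, so the tree set, the weights $\Phi_j$, and the targets $P_\tau$ share an identical combinatorial structure and differ only in the numerical values of the $c_k$. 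The Rosenbrock-$W$ order conditions \cite[Theorem~7.7]{Hairer_book_II} then transfer verbatim, with the $c_k$ specialized to $\varphi(z)=(e^z-1)/z$.

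The two implications follow in the standard way. Sufficiency is immediate: matching all coefficients with $|\tau|\le p$ makes the two $B$-series agree through order $h^p$, so that the local error is $O(h^{p+1})$. For necessity I would use the linear independence of the elementary differentials $F(\tau)$ attached to distinct $TW$-trees; since $\A_n$ is an \emph{arbitrary} matrix in the $W$-setting, one can exhibit problems realizing any prescribed combination of these differentials, which forces each coefficient to match individually. The main obstacle I anticipate lies not in either implication but in the combinatorial bookkeeping behind the figures --- verifying that the propagation rules generate every $TW$-tree up to order four exactly once, with the correct symmetry factor $\sigma(\tau)$ and the correct placement of the $c_k$ corrections --- since it is precisely this verification that licenses reading the conditions off the tables rather than re-deriving them.
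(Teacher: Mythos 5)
Your proposal is correct and follows essentially the same route as the paper: the paper's own proof is a one-line appeal to the Section~\ref{sec:orderconditions} $B$-series machinery (Algorithm~\ref{alg:bseries} with the propagation rules of Figures~\ref{fig:TWtrees}--\ref{fig:TWtrees2}), the near equivalence of exponential-$W$ and Rosenbrock-$W$ expansions noted in \cite{Hochbruck:1998}, and the Rosenbrock-$W$ order conditions of \cite[Theorem~7.7]{Hairer_book_II} --- precisely the three ingredients you develop in detail. Your explicit handling of sufficiency, of necessity via linear independence of the elementary differentials for arbitrary $\A_n$, and of the collection of the $c_k$-contributions into the polynomials $P_\tau(\gamma)$ simply fills in what the paper delegates to the cited theorem.
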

\begin{proof}
The proof follows from our discussion in section \ref{sec:orderconditions}, the near equivalence of order conditions for exponential-W and Rosenbrock-W methods \cite{Hochbruck:1998}, and from the order conditions of Rosenbrock-W methods \cite[Theorem 7.7]{Hairer_book_II}.
\qquad \qed
\end{proof}

\begin{table}[ht]
\begin{center}
\renewcommand{\arraystretch}{1.5}
 \begin{tabular}{|c|c|c|c|}
  \hline
  $i$ & $F(\tau_i)$ 			& $\Phi(\tau)$ & $P_\tau(\gamma)$ \\
  \hline
  $1$ 	& $f^J$ 			& $1$ & $1$ \\
  \hline
  $2$ 	& $f^J_Kf^K$ 			& $\sum \alpha_{jk}$ & $\frac{1}{2}$ \\
  \hline
  $3$ 	& $\A_{JK}f^K$ 			& $\sum \gamma_{jk}$ & $\frac{-\gamma}{2}$ \\
  \hline
  $4$ 	& $f^J_{KL}f^Kf^L$ 		& $\sum \alpha_{jk}\alpha_{jl}$ & $\frac{1}{3}$ \\
  \hline
  $5$ 	& $f^J_K f^K_L f^L$ 		& $\sum \alpha_{jk}\alpha_{kl}$ & $\frac{1}{6}$ \\
  \hline
  $6$ 	& $f^J_K\A_{KL}f^L$ 		& $\sum \alpha_{jk}\gamma_{kl}$ & $\frac{-\gamma}{4}$ \\
  \hline
  $7$ 	& $\A_{JK}f^K_Lf^L$ 		& $\sum \gamma_{jk}\alpha_{kl}$ & $\frac{-\gamma}{4}$ \\
  \hline
  $8$ 	& $\A_{JK}\A_{KL}f^L$		& $\sum \gamma_{jk}\gamma_{kl}$ & $\frac{\gamma^2}{3}$ \\
  \hline
  $9$ 	& $f^J_{KLM}f^Kf^Lf^M$		& $\sum \alpha_{jk}\alpha_{jl}\alpha_{jm}$ & $\frac{1}{4}$  \\
  \hline
  $10$ 	& $f^J_{KL}f^L_Mf^Mf^K$		& $\sum \alpha_{jk}\alpha_{jl}\alpha_{jm}$ & $\frac{1}{8}$ \\
  \hline
  $11$	& $f^J_{KL}\A_{LM}f^Mf^K$	& $\sum \alpha_{jk}\alpha_{jl}\gamma_{lm}$ & $\frac{-\gamma}{6}$ \\
  \hline
  $12$	& $f^J_Kf^K_{LM}f^Mf^L$		& $\sum \alpha_{jk}\alpha_{kl}\alpha_{km}$ & $\frac{1}{12}$  \\
  \hline
  $13$ 	& $\A_{JK}f^K_{LM}f^Mf^L$	& $\sum \gamma_{jk}\alpha_{kl}\alpha_{km}$ & $\frac{-\gamma}{6}$  \\
  \hline
  $14$ 	& $f^J_Kf^K_Lf^L_Mf^M$		& $\sum \alpha_{jk}\alpha_{kl}\alpha_{lm}$ & $\frac{1}{24}$  \\
  \hline
  $15$	& $f^J_Kf^K_LA_{LM}f^M$		& $\sum \alpha_{jk}\alpha_{kl}\gamma_{lm}$ & $\frac{-\gamma}{12}$ \\
  \hline
  $16$	& $f^J_K\A_{KL}f^L_Mf^M$	& $\sum \alpha_{jk}\gamma_{kl}\alpha_{lm}$ & $\frac{-\gamma}{12}$  \\
  \hline
  $17$	& $f^J_K\A_{KL}\A_{LM}f^M$	& $\sum \alpha_{jk}\gamma_{kl}\gamma_{lm}$ & $\frac{\gamma^2}{6}$  \\
  \hline
  $18$	& $\A_{JK}f^K_Lf^L_Mf^M$	& $\sum \gamma_{jk}\alpha_{kl}\alpha_{lm}$ & $\frac{-\gamma}{12}$ \\
  \hline
  $19$	& $\A_{JK}f^K_L\A_{LM}f^M$	& $\sum \gamma_{jk}\alpha_{kl}\gamma_{lm}$ & $\frac{\gamma^2}{8}$  \\
  \hline
  $20$	& $\A_{JK}\A_{KL}f^L_Mf^M$	& $\sum \gamma_{jk}\gamma_{kl}\alpha_{lm}$ & $\frac{\gamma^2}{6}$  \\
  \hline
  $21$	& $\A_{JK}\A_{KL}\A_{LM}f^M$	& $\sum \gamma_{jk}\gamma_{kl}\gamma_{lm}$ & $\frac{-\gamma^3}{4}$ \\
  \hline
 \end{tabular}
\end{center}
\caption{Order conditions for exponential-W methods with general form (\ref{eqn:genformW}) and $p \leq 4$. } 
\label{table:expwcond}
\end{table}

\bibliographystyle{plain}
\bibliography{Master}

\end{document}